\tikzstyle directed=[postaction={decorate,decoration={markings,mark=at position #1 with {\arrow{>}}}}]
\newcommand{\hackcenter}[1]{\xy (0,0)*{#1}; \endxy}
\tikzset{->-/.style={decoration={markings, mark=at position #1 with {\arrow{>}}},postaction={decorate}}}
\tikzset{middlearrow/.style={ decoration={markings,mark= at position 0.5 with {\arrow{#1}} , },
postaction={decorate}}}
\def\mf{\mathfrak}
\newcommand{\scs}{\scriptstyle}
\theoremstyle{plain}
\newtheorem{theorem}{Theorem}
\newtheorem{corollary}[theorem]{Corollary}
\newtheorem{proposition}[theorem]{Proposition}
\newtheorem{lemma}[theorem]{Lemma}
\theoremstyle{definition}
\newtheorem{example}[theorem]{Example}
\newtheorem{definition}[theorem]{Definition}
\newtheorem{remark}[theorem]{Remark}
\numberwithin{equation}{section}
\numberwithin{theorem}{section}
\renewcommand{\to}{\rightarrow}
\numberwithin{equation}{section}
\let\hat=\widehat
\def\Z{{\mathbbm Z}}
\title{A $p$-DG deformed Webster algebra of type $A_1$}
\author{Yasuyoshi Yonezawa}
\email{yasuyoshi.yonezawa@math.nagoya-u.ac.jp}
\address{Graduate school of Mathematics \\
Nagoya University\\
Nagoya, Japan}
\begin{document}
\begin{abstract}
We define a $p$-DG structure on a deformation of Webster algebra of type $A_1$ and its splitter bimodules.
\end{abstract}

\maketitle

\section{Introduction}

Since Khovanov categorified the Jones polynomial \cite{Kh1}, a categorification of the quantum link invariant of type $A$ has been constructed in several constructions.
More precisely, our motivation is to construct a link homology which is an invariant of the link and whose graded Euler characteristic is the quantum link invariants.
In the case of the categorification for type $A$, we have a construction of matrix factorizations \cite{KR,Wu, Yo}, a geometric construction \cite{CK-slm}, a Lie theoretic construction \cite{Su,MS-O}, a diagrammatic construction \cite{Web} and a Howe duality construction \cite{CKL-skew, CK-symm, MY}.


We are also interested in a categorification of the quantum invariant for three-dimensional manifolds.
Defining the quantum link invariant we need quantum group for the generic parameter $q$.
When we construct quantum invariants for three dimensional manifold, the parameter of quantum group should be a root of unity $\zeta$.
One direction to construct the categorification of the WRT three manifold invariants is that, first, we categorify the quantum groups at a root of unity and, then, we extend categorical Howe duality into the categorification of quantum group of root of unity.

Khovanov proposed the categorification of a root of unity.
He introduced \emph{hopfological algebra} theory which is theory of Hopf algebra with homological algebra. See \cite{Kh4,Q1} in detail.
The key observation to categorify a root of unity is that the ring $\Z[\zeta_p]$, where $\zeta_p$ is the $p$-cyclotomic integer, is categorified by the homotopy category of $p$-complexes $\Bbbk[\partial]/(\partial^p)$-$\underline{\mathrm{gmod}}$ over a field of characteristic $p>0$:
$$
K_0(\Bbbk[\partial]/(\partial^p)\text{-}\underline{\mathrm{gmod}})\cong \Z[\zeta_p].
$$
In order to apply the categorification of the ring $\Z[\zeta_p]$ to a categorification of WRT invariants, we need algebra objects in the monoidal category $\Bbbk[\partial]/(\partial^p)$-$\underline{\mathrm{gmod}}$ which induce a braiding functor.
Such objects naturally arise from $p$-differential graded algebras (\emph{$p$-DG algebras}).

To solve the open problem of categorification of WRT invariants, we have some progress results about $p$-DG structure\cite{KQ,KQYpDG,QSus,EQ,EQ2,QYpDG,EllisQ}.
For instance, we have the $p$-DG structure on the symmetric polynomial ring, cyclotomic nilHecke algebra $\mathrm{NH^l_n}$ (a categorification of an irreducible representation of small quantum group for $sl_2$), quiver Schur algebra $S_n^l$ (a categorification of the tensor product $V^{\otimes \ell}_1$, where $V_1$ is the Weyl module of small quantum group $sl_2$), Webster algebras.

In this short note, we briefly recall $p$-DG algebras, $p$-DG modules\cite{Q1}, some progress result, the deformed Webster algebras of type $A_1$\cite{KLSY}, and then we define a $p$-DG structure on the deformed Webster algebra.

\section{$p$-DG algebras and $p$-DG modules}
Let $\Bbbk$ be a field of characteristic $p>0$.
We recall $p$-DG algebras and $p$-DG modules defined in \cite{KQ} and give some $p$-DG algebras introduced in \cite{KQYpDG,KQ,EQ2,QYpDG}.
\subsection{$p$-DG algebras and $p$-DG modules}
\begin{definition}
A $\Z$-graded  algebra $A$ is said to be a $p$-DG algebra if there is a $p$-derivation $\partial_A:A\to A$ with degree $2$, which is $p$-nilpotent and satisfies the Leibniz rule:
$$
\partial_A^p=0,\quad \partial_A(ab) = \partial_A(a)b + a\partial_A(b)
$$
for any $a, b \in A$.
\end{definition}
We will write $(A,\partial_A)$ for the $p$-DG algebra.

By the Leibniz rule and the characteristic of the field $p$, we have the following lemma.
\begin{lemma}\label{lemma}
We have
$$
\partial_A^p(ab) = \partial_A^p(a)b + a\partial_A^p(b).
$$
\end{lemma}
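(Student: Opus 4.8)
The plan is to establish the higher-order Leibniz expansion
$$
\partial_A^n(ab) = \sum_{k=0}^{n} \binom{n}{k}\,\partial_A^k(a)\,\partial_A^{n-k}(b)
$$
for every integer $n \geq 0$ by induction on $n$, and then to specialize to $n = p$. The base case $n = 0$ is trivial, and the case $n = 1$ is exactly the Leibniz rule from the definition of a $p$-DG algebra.

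For the inductive step I would assume the formula for $n-1$ and apply $\partial_A$ once more. Using additivity of $\partial_A$ together with the Leibniz rule on each summand $\partial_A^k(a)\,\partial_A^{n-1-k}(b)$ produces two sums: one in which the extra derivative lands on the left factor and one in which it lands on the right factor. After reindexing so that each sum runs over a common index and collecting the terms carrying exactly $k$ derivatives on $a$, the coefficient of $\partial_A^k(a)\,\partial_A^{n-k}(b)$ becomes $\binom{n-1}{k-1} + \binom{n-1}{k}$, which equals $\binom{n}{k}$ by Pascal's identity. This reindexing is the only genuine computation, and it is routine.

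Finally I specialize to $n = p$. Since $\Bbbk$ has characteristic $p$, the binomial coefficient $\binom{p}{k}$ vanishes in $\Bbbk$ for every $k$ with $0 < k < p$, because $p$ divides each such coefficient. Hence in the expansion of $\partial_A^p(ab)$ only the two extreme terms $k=0$ and $k=p$ survive, and these give precisely $a\,\partial_A^p(b) + \partial_A^p(a)\,b$, as claimed. I expect the only point requiring care to be making the reindexing in the inductive step fully precise; the vanishing of the intermediate binomial coefficients in characteristic $p$ is the conceptual heart of the statement and needs no further argument.
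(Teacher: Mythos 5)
Your proof is correct and follows exactly the argument the paper intends: iterate the Leibniz rule to obtain the binomial expansion $\partial_A^p(ab)=\sum_{k=0}^{p}\binom{p}{k}\partial_A^k(a)\partial_A^{p-k}(b)$ and observe that the intermediate coefficients vanish in characteristic $p$. The paper merely asserts the lemma ``by the Leibniz rule and the characteristic of the field $p$'' without writing out the induction, so your write-up is simply a fuller version of the same approach.
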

When the $p$-DG algebra is generated by the elements of the set $\{a_i|i\in I\}$, to check $\partial_A^p=0$, it is enough to show $\partial_A^p(a_i)=0$ for all $i \in I$ by this lemma.

\begin{definition}
Let $(A,\partial_A)$ and $(B,\partial_B)$ be $p$-DG algebras.
A $\Z$-graded $A$-module $M$ is said to be a left $p$-DG module if there is a degree $2$ endomorphism $\partial_M$ satisfying
\[
\partial_M^p(m)=0, \quad \quad \partial_M(	am)=\partial_A(a)m+a\partial_M(m)
\]
for any elements $a\in A$ and $m\in M$.

Similarly, a $\Z$-graded $A$-module $M$ is said to be a right $p$-DG module if there is a degree $2$ endomorphism $\partial_M$ satisfying
\[
\partial_M^p(m)=0, \quad \quad \partial_M(	ma)=m\partial_A(a)+\partial_M(m)a.
\]
for any elements $a\in A$ and $m\in M$.

A $\Z$-graded $(A,B)$-bimodule $M$ is said to be a $p$-DG bimodule if $M$ is a left $p$-DG $A$-module and a right $p$-DG $B$-module.
\end{definition}
We will write $(M,\partial_M)$ for the $p$-DG modules.

\subsection{$p$-DG structure on the polynomial ring}
We recall $p$-DG structure on the graded polynomial ring\cite{EQ}.

The graded polynomial ring $R=\Bbbk [t_1,...,t_n]$, where $\deg t_i=2$, is a $p$-DG algebra with a $p$-derivation
$$
\xymatrix@R=2pt{
\partial_R:&R\ar[r]\ar@{}[d]|-{\rotatebox{90}{$\in$}}& R\ar@{}[d]|-{\rotatebox{90}{$\in$}}\\
&t_i\ar@{|->}[r]&t_i^2.
}
$$
satisfying the Leibniz rule.
By the Leibniz rule, we have $\partial_R(t_i^k)=k t_i^{k+1}$, and thus we have $\partial_R^p(t_i)=p! t_i^{p+1}=0$.
Hence, we have the following statement by Lemma \ref{lemma}.
\begin{proposition}
The polynomial ring with the $p$-derivation $(R,\partial_R)$ is a $p$-DG algebra.
\end{proposition}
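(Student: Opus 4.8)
The plan is to verify, one at a time, the three defining properties of a $p$-DG algebra for $(R,\partial_R)$: that $\partial_R$ is a well-defined degree $2$ endomorphism, that it obeys the Leibniz rule, and that it is $p$-nilpotent. The first two cost essentially nothing. Since $R=\Bbbk[t_1,\dots,t_n]$ is the free graded-commutative polynomial algebra on the $t_i$, the assignment $t_i\mapsto t_i^2$ together with the Leibniz rule determines a unique $\Bbbk$-linear map $\partial_R$, so well-definedness is automatic; and as $\deg t_i=2$ while $\deg t_i^2=4$, the map $\partial_R$ raises degree by $2$, with the Leibniz rule holding by construction.

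The substantive point is the $p$-nilpotence $\partial_R^p=0$. Here I would invoke Lemma \ref{lemma}, together with the remark following it: because $\partial_R^p$ again distributes additively across products, it suffices to check $\partial_R^p(t_i)=0$ on each generator. First I would establish the formula $\partial_R(t_i^k)=k\,t_i^{k+1}$ by induction on $k$ using the Leibniz rule, and then iterate it, in a second induction, to obtain $\partial_R^k(t_i)=k!\,t_i^{k+1}$. Specializing to $k=p$ gives $\partial_R^p(t_i)=p!\,t_i^{p+1}$, and the conclusion then rests on the single arithmetic fact that $p!\equiv 0\pmod p$ in characteristic $p$, so $\partial_R^p(t_i)=0$ for every $i$.

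The hard part, such as it is, is not any computation but recognizing that the generator-by-generator check is legitimate: this is precisely what Lemma \ref{lemma} supplies, ensuring that the vanishing of $\partial_R^p$ on the $t_i$ propagates to every monomial and, by $\Bbbk$-linearity, to all of $R$. Once this reduction is in place, the whole proof collapses to the elementary induction above and the observation that $p$ divides $p!$, so I would expect no genuine obstacle beyond bookkeeping.
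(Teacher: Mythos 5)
Your proof is correct and follows essentially the same route as the paper: both derive $\partial_R(t_i^k)=k\,t_i^{k+1}$ from the Leibniz rule, iterate to get $\partial_R^p(t_i)=p!\,t_i^{p+1}=0$ in characteristic $p$, and invoke Lemma \ref{lemma} to reduce the $p$-nilpotence check to the generators $t_i$. The only difference is that you spell out the inductions and the well-definedness of $\partial_R$ explicitly, which the paper leaves implicit.
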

%
%
%

The invariant ring $R^{\mathfrak{S}_n}$ is generated by the elementary symmetric functions
$$
E_i=\sum_{1 \leq j_1<j_2<\cdots <j_i\leq n}t_{j_1}t_{j_2}\cdots t_{j_i}\qquad (i=1,...,n).
$$

We set the generating function
$$
G:=\sum_{i=0}^n E_i s^i=\prod_{i=1}^n(1+t_i s),
$$
where $E_0=1$.
We have
\begin{eqnarray*}
\partial_R(G)=\sum_{i=1}^n \partial_R(E_i) s^i
&=&\sum_{j=1}^{n}\partial_R(t_j)s\prod_{i=1\atop i\not= j}^n(1+t_i s)
=\sum_{j=1}^{n}t_j^2s\prod_{i=1\atop i\not= j}^n(1+t_i s)\\
&=&\sum_{j=1}^{n}t_j\prod_{i=1}^n(1+t_i s)-\sum_{j=1}^{n}t_j\prod_{i=1\atop i\not= j}^n(1+t_i s)\\
&=&E_1G-\frac{d}{ds}(G)=\sum_{i=1}^n (E_1E_i-(i+1)E_{i+1}) s^i,
\end{eqnarray*}
where $E_{n+1}=0$.

Therefore, the $p$-derivation $\partial_R$ acts on the elementary symmetric functions $E_i$ as
$$
\partial_R(E_i)=E_1E_i-(i+1)E_{i+1} \text{ for } 1\leq i\leq n-1, \quad \partial_R(E_n)=E_1E_n.
$$

Using Lemma \ref{lemma} we have
\begin{eqnarray*}
\partial_R^p(G)&=&\sum_{i=0}^n \partial_R^p(E_i) s^i=\partial_R^p\left(\prod_{i=1}^n(1+t_i s)\right)=0
\end{eqnarray*}
since we have $\partial_R^p(1+t_i s)=\partial_R^p(t_i )s=0$.
Hence, we have the following statement.
\begin{proposition}\label{symfun}
The invariant ring with the $p$-derivation $(R^{\mathfrak{S}_n},\partial_R)$ is a $p$-DG algebra.
\end{proposition}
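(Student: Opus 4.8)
The plan is to check that the restriction of $\partial_R$ to the subalgebra $R^{\mathfrak{S}_n} \subseteq R$ makes it a $p$-DG algebra. Since $R^{\mathfrak{S}_n}$ is a $\Z$-graded subalgebra and $\partial_R$ already has degree $2$ and obeys the Leibniz rule everywhere on $R$, the only points that genuinely need verification are that $\partial_R$ \emph{preserves} the subring $R^{\mathfrak{S}_n}$ and that $\partial_R^p$ vanishes on it.

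First I would establish invariance, $\partial_R(R^{\mathfrak{S}_n}) \subseteq R^{\mathfrak{S}_n}$. This is precisely where the generating-function computation does the work: the formula
$$
\partial_R(E_i) = E_1 E_i - (i+1)E_{i+1} \quad (1 \leq i \leq n-1), \qquad \partial_R(E_n) = E_1 E_n
$$
expresses the image of each elementary symmetric function as a polynomial in the $E_j$, hence as an element of $R^{\mathfrak{S}_n}$. Because the $E_i$ generate $R^{\mathfrak{S}_n}$ and $\partial_R$ satisfies the Leibniz rule, every symmetric polynomial is then carried to a symmetric polynomial, so $\partial_R$ restricts to a degree-$2$ derivation of $R^{\mathfrak{S}_n}$.

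Next I would check $p$-nilpotency. By the remark following Lemma \ref{lemma}, it suffices to verify $\partial_R^p(E_i) = 0$ on the algebra generators rather than on all of $R^{\mathfrak{S}_n}$. This is exactly the content of the computation $\partial_R^p(G) = \sum_{i=0}^n \partial_R^p(E_i)\, s^i = 0$: comparing coefficients of the distinct powers of $s$ forces $\partial_R^p(E_i) = 0$ for every $i$. Together with the inherited degree and Leibniz properties, this shows that $(R^{\mathfrak{S}_n}, \partial_R)$ satisfies all the axioms of a $p$-DG algebra.

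I do not expect a serious obstacle. The crux is the single observation that the formula for $\partial_R(E_i)$ already lands inside $R^{\mathfrak{S}_n}$, so the subring is stable under the derivation; everything else is inherited from $(R, \partial_R)$. The only subtlety worth flagging is that both closure and $p$-nilpotency need only be tested on the generators $E_i$ — closure by the Leibniz rule and $p$-nilpotency by Lemma \ref{lemma} — which is what keeps the argument short.
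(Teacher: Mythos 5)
Your proposal is correct and follows essentially the same route as the paper: the generating function $G=\prod_{i=1}^n(1+t_is)$ yields the formula $\partial_R(E_i)=E_1E_i-(i+1)E_{i+1}$, which shows $R^{\mathfrak{S}_n}$ is stable under $\partial_R$, and $p$-nilpotency is obtained by applying Lemma \ref{lemma} to get $\partial_R^p(G)=0$ and comparing coefficients of $s^i$. The only difference is presentational: you make explicit the closure step (that $\partial_R$ preserves the subring), which the paper leaves implicit.
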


Proving existence of $p$-DG structure on the deformed Webster algebra in Section \ref{sec4}, we show $\partial_R^p(E_i)=0$ in detail.

By Leibniz rule, we have
\begin{eqnarray*}
\partial_R^p(G)&=&
\sum_{\underline{p}=(p_1,p_2,...,p_n)\in \Z_{\geq 0}^n\atop p_1+p_2+\cdots +p_n=p}
\left(p\atop \underline{p}\right)\left(\prod_{i=1}^n\partial_R^{p_i}(1+t_i s)\right)
=
\sum_{\underline{p}=(p_1,p_2,...,p_n)\in \Z_{\geq 0}^n\atop p_1+p_2+\cdots +p_n=p}
\left(p\atop \underline{p}\right)\left(\prod_{i=1}^n(\partial_R^{p_i}(1)+p_i! t_i^{p_i+1} s)\right),
\end{eqnarray*}
where
$$
\left(p\atop \underline{p}\right)=\prod_{a=0}^{n-1} \left(p-\sum_{b=1}^{a}p_b\atop p_{a+1}\right).
$$

Since the characteristic of $\Bbbk$ is $p$  we have
$$
\left(p\atop \underline{p}\right)p_i!=
\left(p\atop p_i\right)\left(p-p_i\atop \underline{p}(i)\right)p_i!=
p\left(p-1\atop p_i-1\right)\left(p-p_i\atop \underline{p}(i)\right)(p_i-1)!=0,
$$
where $\underline{p}(i)=(p_1,...,p_{i-1},p_{i+1},...,p_n)\in \Z_{\geq 0}^{n-1}$.
Therefore, each term in $\partial_R^p(G)$ is divisible by $p$.
That is, each term in $\partial_R^p(E_i)$ is divisible by $p$.
Hence, we have $\partial_R^p(E_i)=0$.

\subsection{The $p$-DG nilHecke algebra}
We recall the nilHecke algebra and its diagrammatic description\cite{Lau1}.

The nilHecke algebra $\mathrm{NH}_n$ is the unital algebra generated by $x_i$ ($i=1,...,n$) and $\psi_j$ ($j=1,...,n-1$) subject to relations
\begin{eqnarray}
& & \psi_i^2=0, \ \  \psi_i \delta_{i+1} \psi_i =
\psi_{i+1}\psi_i \psi_{i+1}, \ \  \psi_i\psi_j = \psi_j
\psi_i
\ \ \mathrm{if} \ \  |i-j|>1, \\
& &  x_i \psi_j = \psi_j x_i \ \ \mathrm{if} \ \  j\not= i, i+1, \ \
 x_i x_j = x_j x_i, \\
 & & x_i \psi_i - \psi_i x_{i+1} = 1, \ \
     \psi_i x_i - x_{i+1}\psi_i = 1.
\end{eqnarray}

We have a graphical presentation of $\mathrm{NH}_n$ whose the unit $1_{\mathrm{NH}_n}$ is depicted as the $n$-strands, $x_i$ is depicted as a dot on the $i$-th strand and $\psi_i$ is the crossing of the $i$-th and $(i+1)$-st strands:

\[
1_{\mathrm{NH}_n}=
\hackcenter{\begin{tikzpicture}[scale=0.75]
    \draw[thick, ] (0,0) -- (0,1.5) ;
    \draw[thick, ] (.75,0) -- (.75,1.5) ;
    \node at (1.625,.75) {$\cdots$};
    \draw[thick, ] (2.5,0) -- (2.5,1.5) ;
    \node at (0,-.25) {$\scs 1$};
    \node at (.75,-.25) {$\scs 2$};
    \node at (2.5,-.25) {$\scs n$};
\end{tikzpicture}}
\quad \quad \quad \quad
x_i:=
\hackcenter{\begin{tikzpicture}[scale=0.75]
    \draw[thick, ] (0,0) -- (0,1.5) ;
    \node at (.75,.75) {$ \cdots$};
    \draw[thick, ] (1.5,0) -- (1.5,1.5) ;
    \node at (2.25,.75) {$ \cdots$};
    \draw[thick, ] (3,0) -- (3,1.5) ;
    \node at (0,-.25) {$\scs 1$};
    \node at (1.5,-.25) {$\scs i$};
    \node at (3,-.25) {$\scs n$};
    \filldraw  (1.5,.75) circle (2.5pt);
\end{tikzpicture}}
\quad \quad \quad \quad
\psi_i:=
\hackcenter{\begin{tikzpicture}[scale=0.75]
    \draw[thick, ] (-.5,0) -- (-.5,1.5) ;
    \node at (.25,.75) {$\cdots$};
    \draw[thick, ] (1,0) .. controls (1,.75) and (1.5,.75) .. (1.5,1.5);
    \draw[thick, ] (1.5,0) .. controls (1.5,.75) and (1,.75) .. (1,1.5);
    \node at (2.25,.75) {$\cdots$};
    \draw[thick, ] (3,0) -- (3,1.5) ;
    \node at (-.5,-.25) {$\scs 1$};
    \node at (1,-.25) {$\scs i$};
    \node at (1.5,-.25) {$\scs i+1$};
    \node at (3,-.25) {$\scs n$};
\end{tikzpicture}}
\]


Multiplication in $\mathrm{NH}_n$ is represented by vertical concatenation of diagrams.
The product $xy$ is represented as follows:
\[
\hackcenter{\begin{tikzpicture}[scale=0.75]
    \draw[thick, ] (0,0) -- (0,1.5) ;
    \draw[thick, ] (.75,0) -- (.75,1.5) ;
    \node at (1.625,.1) {$\cdots$};
    \node at (1.625,1.35) {$\cdots$};
    \draw[thick, ] (2.5,0) -- (2.5,1.5) ;
    \filldraw[color=white,draw=black] (-.25,.3) rectangle (2.75,1.2);
    \node at (0,-.25) {$\scs 1$};
    \node at (.75,-.25) {$\scs 2$};
    \node at (2.5,-.25) {$\scs n$};
    \node at (2.5,1.75) {};
    \node at (1.25,.75) {$x$};
\end{tikzpicture}}
\cdot
\hackcenter{\begin{tikzpicture}[scale=0.75]
    \draw[thick, ] (0,0) -- (0,1.5) ;
    \draw[thick, ] (.75,0) -- (.75,1.5) ;
    \node at (1.625,.1) {$\cdots$};
    \node at (1.625,1.35) {$\cdots$};
    \draw[thick, ] (2.5,0) -- (2.5,1.5) ;
    \filldraw[color=white,draw=black] (-.25,.3) rectangle (2.75,1.2);
    \node at (0,-.25) {$\scs 1$};
    \node at (.75,-.25) {$\scs 2$};
    \node at (2.5,-.25) {$\scs n$};
    \node at (2.5,1.75) {};
    \node at (1.25,.75) {$y$};
\end{tikzpicture}}
\;\; = \;\;
\hackcenter{\begin{tikzpicture}[scale=0.75]
    \draw[thick, ] (0,0) -- (0,1.5) ;
    \draw[thick, ] (.75,0) -- (.75,1.5) ;
    \node at (1.625,.75) {$\cdots$};
    \draw[thick, ] (2.5,0) -- (2.5,1.5) ;
    \node at (1.625,.05) {$\cdots$};
    \node at (1.625,1.4) {$\cdots$};
    \filldraw[color=white,draw=black] (-.25,.9) rectangle (2.75,1.3);
    \filldraw[color=white,draw=black] (-.25,.2) rectangle (2.75,.6);
    \node at (0,-.25) {$\scs 1$};
    \node at (.75,-.25) {$\scs 2$};
    \node at (2.5,-.25) {$\scs n$};
    \node at (2.5,1.75) {};
    \node at (1.25,1.1) {$\scs x$};
    \node at (1.25,.4) {$\scs y$};
\end{tikzpicture}}
\]

The defining relations are represented by the following diagrammatic equalities.

$$
\hackcenter{\begin{tikzpicture}[scale=0.75]
    \draw[thick] (0,0) .. controls ++(0,.375) and ++(0,-.375) .. (.75,.75);
    \draw[thick] (.75,0) .. controls ++(0,.375) and ++(0,-.375) .. (0,.75);
    \draw[thick] (0,.75 ) .. controls ++(0,.375) and ++(0,-.375) .. (.75,1.5);
    \draw[thick] (.75,.75) .. controls ++(0,.375) and ++(0,-.375) .. (0,1.5);
\end{tikzpicture}}
=0, \ \ \ \ \ \ \ \
\hackcenter{\begin{tikzpicture}[scale=0.75]
    \draw[thick,  ] (0,0) .. controls ++(0,1) and ++(0,-1) .. (1.2,2);
    \draw[thick] (.6,0) .. controls ++(0,.5) and ++(0,-.5) .. (0,1.0);
    \draw[thick] (0,1.0) .. controls ++(0,.5) and ++(0,-.5) .. (0.6,2);
    \draw[thick] (1.2,0) .. controls ++(0,1) and ++(0,-1) .. (0,2);
    \node at (0,2) {$\scs \:$};
\end{tikzpicture}}
\;\; = \;\;
\hackcenter{\begin{tikzpicture}[scale=0.75]
    \draw[thick] (0,0) .. controls ++(0,1) and ++(0,-1) .. (1.2,2);
    \draw[thick] (.6,0) .. controls ++(0,.5) and ++(0,-.5) .. (1.2,1.0);
    \draw[thick] (1.2,1.0) .. controls ++(0,.5) and ++(0,-.5) .. (0.6,2.0);
    \draw[thick] (1.2,0) .. controls ++(0,1) and ++(0,-1) .. (0,2.0);
    \node at (0,2) {$\scs \:$};
\end{tikzpicture}}
$$

$$
\hackcenter{\begin{tikzpicture}[scale=0.75]
    \draw[thick] (0,0) .. controls (0,.75) and (.75,.75) .. (.75,1.5)
        node[pos=.25, shape=coordinate](DOT){};
    \draw[thick] (.75,0) .. controls (.75,.75) and (0,.75) .. (0,1.5);
    \filldraw  (DOT) circle (2.5pt);
\end{tikzpicture}}
\quad -\quad
\hackcenter{\begin{tikzpicture}[scale=0.75]
    \draw[thick] (0,0) .. controls (0,.75) and (.75,.75) .. (.75,1.5)
        node[pos=.75, shape=coordinate](DOT){};
    \draw[thick] (.75,0) .. controls (.75,.75) and (0,.75) .. (0,1.5);
    \filldraw  (DOT) circle (2.5pt);
\end{tikzpicture}}
\quad =\quad
\hackcenter{\begin{tikzpicture}[scale=0.75]
    \draw[thick] (0,0) -- (0,1.5);
    \draw[thick] (.75,0) -- (0.75,1.5);
\end{tikzpicture}}
\quad =\quad
\hackcenter{\begin{tikzpicture}[scale=0.75]
    \draw[thick] (0,0) .. controls (0,.75) and (.75,.75) .. (.75,1.5)
        node[pos=.75, shape=coordinate](DOT){};
    \draw[thick] (.75,0) .. controls (.75,.75) and (0,.75) .. (0,1.5);
    \filldraw  (DOT) circle (2.5pt);
\end{tikzpicture}}
\quad -\quad
\hackcenter{\begin{tikzpicture}[scale=0.75]
    \draw[thick] (0,0) .. controls (0,.75) and (.75,.75) .. (.75,1.5)
        node[pos=.25, shape=coordinate](DOT){};
    \draw[thick] (.75,0) .. controls (.75,.75) and (0,.75) .. (0,1.5);
    \filldraw  (DOT) circle (2.5pt);
\end{tikzpicture}}
$$

We denote the element $x_i^{a}$ by a dot with a label $a$ on the $i$-th strand. 

\[
x_i^a:=
\hackcenter{\begin{tikzpicture}[scale=0.75]
    \draw[thick, ] (0,0) -- (0,1.5) ;
    \node at (.75,.75) {$ \cdots$};
    \draw[thick, ] (1.5,0) -- (1.5,1.5) ;
    \node at (2.25,.75) {$ \cdots$};
    \draw[thick, ] (3,0) -- (3,1.5) ;
    \node at (0,-.25) {$\scs 1$};
    \node at (1.5,-.25) {$\scs i$};
    \node at (3,-.25) {$\scs n$};
    \node at (1.75,.6) {$\scs a$};
    \filldraw  (1.5,.6) circle (2.5pt);
\end{tikzpicture}}
\]

We recall a $p$-DG structure on the nilHecke algebra\cite{KQYpDG}.

The nilHecke algebra $\mathrm{NH}_n$ is a $p$-DG algebra with a $p$-derivation for the generators
\begin{eqnarray*}
\partial_{\mathrm{NH}}\left(
\hackcenter{\begin{tikzpicture}[scale=0.75]
    \draw[thick, ] (0,0) -- (0,1.5) ;
    \node at (.75,.75) {$ \cdots$};
    \draw[thick, ] (1.5,0) -- (1.5,1.5) ;
    \node at (2.25,.75) {$ \cdots$};
    \draw[thick, ] (3,0) -- (3,1.5) ;
    \node at (0,-.25) {$\scs 1$};
    \node at (1.5,-.25) {$\scs i$};
    \node at (3,-.25) {$\scs n$};
    \filldraw  (1.5,.6) circle (2.5pt);
    \node at (0,1.75) {};
\end{tikzpicture}}
\right)
&:=&
\hackcenter{\begin{tikzpicture}[scale=0.75]
    \draw[thick, ] (0,0) -- (0,1.5) ;
    \node at (.75,.75) {$ \cdots$};
    \draw[thick, ] (1.5,0) -- (1.5,1.5) ;
    \node at (2.25,.75) {$ \cdots$};
    \draw[thick, ] (3,0) -- (3,1.5) ;
    \node at (0,-.25) {$\scs 1$};
    \node at (1.5,-.25) {$\scs i$};
    \node at (3,-.25) {$\scs n$};
    \node at (1.75,.6) {$\scs 2$};
    \filldraw  (1.5,.6) circle (2.5pt);
    \node at (0,1.75) {};
\end{tikzpicture}}
\\
\partial_{\mathrm{NH}}\left(
\hackcenter{\begin{tikzpicture}[scale=0.75]
    \draw[thick, ] (-.5,0) -- (-.5,1.5) ;
    \node at (.25,.75) {$\cdots$};
    \draw[thick, ] (1,0) .. controls (1,.75) and (1.5,.75) .. (1.5,1.5);
    \draw[thick, ] (1.5,0) .. controls (1.5,.75) and (1,.75) .. (1,1.5);
    \node at (2.25,.75) {$\cdots$};
    \draw[thick, ] (3,0) -- (3,1.5) ;
    \node at (-.5,-.25) {$\scs 1$};
    \node at (1,-.25) {$\scs i$};
    \node at (1.5,-.25) {$\scs i+1$};
    \node at (3,-.25) {$\scs n$};
    \node at (0,1.75) {};
\end{tikzpicture}}
\right)
&:=&
-
\hackcenter{\begin{tikzpicture}[scale=0.75]
    \draw[thick, ] (-.5,0) -- (-.5,1.5) ;
    \node at (.25,.75) {$\cdots$};
    \draw[thick, ] (1,0) .. controls (1,.75) and (1.5,.75) .. (1.5,1.5);
    \draw[thick, ] (1.5,0) .. controls (1.5,.75) and (1,.75) .. (1,1.5)  node[pos=.8, shape=coordinate](DOT){};
    \filldraw  (DOT) circle (2.5pt);
    \node at (2.25,.75) {$\cdots$};
    \draw[thick, ] (3,0) -- (3,1.5) ;
    \node at (-.5,-.25) {$\scs 1$};
    \node at (1,-.25) {$\scs i$};
    \node at (1.5,-.25) {$\scs i+1$};
    \node at (3,-.25) {$\scs n$};
    \node at (0,1.75) {};
\end{tikzpicture}}
-
\hackcenter{\begin{tikzpicture}[scale=0.75]
    \draw[thick, ] (-.5,0) -- (-.5,1.5) ;
    \node at (.25,.75) {$\cdots$};
    \draw[thick, ] (1,0) .. controls (1,.75) and (1.5,.75) .. (1.5,1.5);
    \draw[thick, ] (1.5,0) .. controls (1.5,.75) and (1,.75) .. (1,1.5)  node[pos=.2, shape=coordinate](DOT){};
    \filldraw  (DOT) circle (2.5pt);
    \node at (2.25,.75) {$\cdots$};
    \draw[thick, ] (3,0) -- (3,1.5) ;
    \node at (-.5,-.25) {$\scs 1$};
    \node at (1,-.25) {$\scs i$};
    \node at (1.5,-.25) {$\scs i+1$};
    \node at (3,-.25) {$\scs n$};
    \node at (0,1.75) {};
\end{tikzpicture}}.
\end{eqnarray*}

By the Leibniz rule, we have
$$
\partial_{\mathrm{NH}}^p\left(
\hackcenter{\begin{tikzpicture}[scale=0.75]
    \draw[thick, ] (0,0) -- (0,1.5) ;
    \node at (.75,.75) {$ \cdots$};
    \draw[thick, ] (1.5,0) -- (1.5,1.5) ;
    \node at (2.25,.75) {$ \cdots$};
    \draw[thick, ] (3,0) -- (3,1.5) ;
    \node at (0,-.25) {$\scs 1$};
    \node at (1.5,-.25) {$\scs i$};
    \node at (3,-.25) {$\scs n$};
    \filldraw  (1.5,.6) circle (2.5pt);
    \node at (0,1.75) {};
\end{tikzpicture}}
\right)
=
p!\hackcenter{\begin{tikzpicture}[scale=0.75]
    \draw[thick, ] (0,0) -- (0,1.5) ;
    \node at (.75,.75) {$ \cdots$};
    \draw[thick, ] (1.5,0) -- (1.5,1.5) ;
    \node at (2.25,.75) {$ \cdots$};
    \draw[thick, ] (3,0) -- (3,1.5) ;
    \node at (0,-.25) {$\scs 1$};
    \node at (1.5,-.25) {$\scs i$};
    \node at (3,-.25) {$\scs n$};
    \node at (2,.5) {$\scs p+1$};
    \filldraw  (1.5,.6) circle (2.5pt);
    \node at (0,1.75) {};
\end{tikzpicture}}
=0
$$
and
$$
\partial_{\mathrm{NH}}^2\left(
\hackcenter{\begin{tikzpicture}[scale=0.75]
    \draw[thick, ] (-.5,0) -- (-.5,1.5) ;
    \node at (.25,.75) {$\cdots$};
    \draw[thick, ] (1,0) .. controls (1,.75) and (1.5,.75) .. (1.5,1.5);
    \draw[thick, ] (1.5,0) .. controls (1.5,.75) and (1,.75) .. (1,1.5);
    \node at (2.25,.75) {$\cdots$};
    \draw[thick, ] (3,0) -- (3,1.5) ;
    \node at (-.5,-.25) {$\scs 1$};
    \node at (1,-.25) {$\scs i$};
    \node at (1.5,-.25) {$\scs i+1$};
    \node at (3,-.25) {$\scs n$};
    \node at (0,1.75) {};
\end{tikzpicture}}
\right)
=
2
\hackcenter{\begin{tikzpicture}[scale=0.75]
    \draw[thick, ] (-.5,0) -- (-.5,1.5) ;
    \node at (.25,.75) {$\cdots$};
    \draw[thick, ] (1,0) .. controls (1,.75) and (1.5,.75) .. (1.5,1.5);
    \draw[thick, ] (1.5,0) .. controls (1.5,.75) and (1,.75) .. (1,1.5)  node[pos=.2, shape=coordinate](DOT1){} node[pos=.8, shape=coordinate](DOT2){};
    \filldraw  (DOT1) circle (2.5pt);
    \filldraw  (DOT2) circle (2.5pt);
    \node at (2.25,.75) {$\cdots$};
    \draw[thick, ] (3,0) -- (3,1.5) ;
    \node at (-.5,-.25) {$\scs 1$};
    \node at (1,-.25) {$\scs i$};
    \node at (1.5,-.25) {$\scs i+1$};
    \node at (3,-.25) {$\scs n$};
    \node at (0,1.75) {};
\end{tikzpicture}}
\qquad
\qquad
\partial_{\mathrm{NH}}^3\left(
\hackcenter{\begin{tikzpicture}[scale=0.75]
    \draw[thick, ] (-.5,0) -- (-.5,1.5) ;
    \node at (.25,.75) {$\cdots$};
    \draw[thick, ] (1,0) .. controls (1,.75) and (1.5,.75) .. (1.5,1.5);
    \draw[thick, ] (1.5,0) .. controls (1.5,.75) and (1,.75) .. (1,1.5);
    \node at (2.25,.75) {$\cdots$};
    \draw[thick, ] (3,0) -- (3,1.5) ;
    \node at (-.5,-.25) {$\scs 1$};
    \node at (1,-.25) {$\scs i$};
    \node at (1.5,-.25) {$\scs i+1$};
    \node at (3,-.25) {$\scs n$};
    \node at (0,1.75) {};
\end{tikzpicture}}
\right)
=0.
$$

By Lemma \ref{lemma}, the differential $\partial_{\mathrm{NH}}$ is $p$-nilpotent.
Moreover, the differential $\partial_{\mathrm{NH}}$ preserves the nilHecke relations (this proof is left for the reader).
Hence, we have the following statement.
\begin{proposition}\label{nilhecke}
The nilHecke algebra with the $p$-derivation $(\mathrm{NH}_n,\partial_{\mathrm{NH}})$ is a $p$-DG algebra.
\end{proposition}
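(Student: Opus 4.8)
The plan is to establish the two conditions in the definition of a $p$-DG algebra for the pair $(\mathrm{NH}_n,\partial_{\mathrm{NH}})$: first, that $\partial_{\mathrm{NH}}$ is a well-defined degree-$2$ derivation, i.e.\ that the prescription on the generators $x_i$ and $\psi_i$ descends to the quotient of the free algebra by the nilHecke relations; and second, that $\partial_{\mathrm{NH}}$ is $p$-nilpotent. The second condition is already essentially in hand from the generator computations displayed above, so the genuine content is the first. Throughout I read the diagrammatic definition as $\partial_{\mathrm{NH}}(x_i)=x_i^2$ and $\partial_{\mathrm{NH}}(\psi_i)=-x_i\psi_i-\psi_i x_{i+1}$.

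For well-definedness I would use the standard criterion that a derivation prescribed on generators of a presented algebra $F/I$ descends to $F/I$ precisely when, after extending to the free algebra by the Leibniz rule, it sends each defining relator into $I$; equivalently $\partial_{\mathrm{NH}}(r)=0$ in $\mathrm{NH}_n$ for every defining relation $r=0$. I would run through the relations one at a time. The homogeneous relations $\psi_i^2=0$, the braid relation $\psi_i\psi_{i+1}\psi_i=\psi_{i+1}\psi_i\psi_{i+1}$, the distant commutations $\psi_i\psi_j=\psi_j\psi_i$ for $|i-j|>1$ and $x_i\psi_j=\psi_j x_i$ for $j\neq i,i+1$, and $x_ix_j=x_jx_i$ are checked by applying the Leibniz rule and redistributing dots; diagrammatically these amount to sliding dots past crossings with which they do not interact, and the two signs built into $\partial_{\mathrm{NH}}(\psi_i)$ are arranged so that the resulting terms cancel in pairs.

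The main obstacle is the pair of inhomogeneous dot-slide relations $x_i\psi_i-\psi_i x_{i+1}=1$ and $\psi_i x_i-x_{i+1}\psi_i=1$. Since the right-hand side lies in degree $0$ while $\partial_{\mathrm{NH}}$ raises degree by $2$, consistency forces $\partial_{\mathrm{NH}}$ to annihilate each left-hand side. Expanding the first by Leibniz, both $\partial_{\mathrm{NH}}(x_i\psi_i)$ and $\partial_{\mathrm{NH}}(\psi_i x_{i+1})$ collapse to $-x_i\psi_i x_{i+1}$, so their difference vanishes with no further input. The second relation is the delicate one: the naive Leibniz expansion does not vanish termwise, and one must re-substitute the nilHecke relations themselves — in particular the identity $\psi_i x_i x_{i+1}=x_i x_{i+1}\psi_i$, itself a consequence of $\psi_i x_i-x_{i+1}\psi_i=1$ — to force the final cancellation. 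This mild circularity, using the relations to verify that $\partial_{\mathrm{NH}}$ preserves the relations, is legitimate because the computation takes place in the quotient $\mathrm{NH}_n$, but it is the one step that is not purely formal and deserves to be written out. A cleaner route that sidesteps the case analysis entirely is to use the faithful polynomial representation $\mathrm{NH}_n\hookrightarrow\mathrm{End}_\Bbbk(R)$, with $x_i$ acting by multiplication by $t_i$ and $\psi_i$ by the divided-difference operator: the graded commutator $\phi\mapsto\partial_R\circ\phi-\phi\circ\partial_R$ is automatically a derivation of $\mathrm{End}_\Bbbk(R)$, and one need only verify that it carries $x_i$ to $x_i^2$ and $\psi_i$ to $-x_i\psi_i-\psi_i x_{i+1}$ (both in $\mathrm{NH}_n$), whence it restricts to a genuine derivation of $\mathrm{NH}_n$ realizing $\partial_{\mathrm{NH}}$, making well-definedness free.

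For $p$-nilpotency, by Lemma \ref{lemma} together with the remark that it suffices to check $\partial_{\mathrm{NH}}^p$ on generators, I only need $\partial_{\mathrm{NH}}^p(x_i)=0$ and $\partial_{\mathrm{NH}}^p(\psi_i)=0$. The first is the computation $\partial_{\mathrm{NH}}^p(x_i)=p!\,x_i^{p+1}=0$ already recorded. For the crossings, the displayed computations give $\partial_{\mathrm{NH}}^3(\psi_i)=0$, so $\partial_{\mathrm{NH}}^p(\psi_i)=\partial_{\mathrm{NH}}^{p-3}(\partial_{\mathrm{NH}}^3(\psi_i))=0$ whenever $p\geq 3$, while for $p=2$ one has $\partial_{\mathrm{NH}}^2(\psi_i)=2(\,\cdot\,)=0$ directly from $\mathrm{char}\,\Bbbk=2$. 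In every case $\partial_{\mathrm{NH}}^p$ vanishes on the generators, hence on all of $\mathrm{NH}_n$ by Lemma \ref{lemma}. Combining the two conditions yields the proposition.
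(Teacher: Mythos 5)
Your main line of argument is correct and in fact supplies exactly the detail the paper omits: the paper displays the computations $\partial_{\mathrm{NH}}^p(x_i)=p!\,x_i^{p+1}=0$ and $\partial_{\mathrm{NH}}^3(\psi_i)=0$, invokes Lemma \ref{lemma}, and declares that compatibility with the nilHecke relations ``is left for the reader.'' Your direct verification is the intended reader's exercise, and your diagnosis is accurate: $\partial_{\mathrm{NH}}(x_i\psi_i)$ and $\partial_{\mathrm{NH}}(\psi_i x_{i+1})$ both equal $-x_i\psi_i x_{i+1}$ on the nose, while the relation $\psi_i x_i-x_{i+1}\psi_i=1$ only survives after re-substituting the relations themselves; the nilpotency argument ($p=2$ versus $p\geq 3$) is also fine.

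However, your proposed ``cleaner route'' via the polynomial representation does not work as stated. Taking $\partial_R=\sum_i t_i^2\,\partial/\partial t_i$ and $\psi_i$ acting as the divided difference $\partial_i(f)=(f-s_if)/(t_i-t_{i+1})$, one computes
\[
[\partial_R,\partial_i](f)=\partial_R\!\left(\frac{f-s_if}{t_i-t_{i+1}}\right)-\frac{\partial_R f-s_i(\partial_R f)}{t_i-t_{i+1}}=-(t_i+t_{i+1})\,\partial_i(f),
\]
so the commutator carries $\psi_i$ to $-(x_i+x_{i+1})\psi_i$, not to $-x_i\psi_i-\psi_i x_{i+1}$. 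These are genuinely different elements of $\mathrm{NH}_n$: using the dot-slide relations, $-x_i\psi_i-\psi_i x_{i+1}=-2x_i\psi_i+1$ while $-(x_i+x_{i+1})\psi_i=-x_i\psi_i-\psi_i x_i+1$, and already evaluating on $1\in R$ gives $1$ for the former and $0$ for the latter. The commutator with $\partial_R$ does define a valid $p$-DG structure on $\mathrm{NH}_n$ (one member of the Khovanov--Qi family of differentials), but not the one in this paper; to realize $\partial_{\mathrm{NH}}$ as a commutator you would need a twisted module differential on $R$, not $\partial_R$ itself. Since this route is offered only as an optional shortcut, the proposition still stands on your direct verification, but the aside should be deleted or corrected.
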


\subsection{The $p$-DG Webster algebra of type $A_1$}\label{pdg-webster}

Webster defined algebras which categorify tensor products of representations of quantum groups \cite{Web}.
Here we recall Webster algebra of type $A_1$ and $p$-DG structure of the Webster algebra\cite{KQYpDG}.

Let $m\geq 0$ be an integer.
For a sequence of non-negative integers $\mathbf{s}=(s_{1},...,s_{m})$, let $\mathrm{Seq}(\mathbf{s},n)$ be the set of all sequences $\mathbf{i}=(i_1,...,i_{m+n})$ in which $n$ of the entries are $\mf{b}$ and ${s}_{i}$ appears exactly once and in the order in which it appears in $\mathbf{s}$.
Denote by $\mathbf{i}_j$ the $j$-th entry of $\mathbf{i}$ and denote by $I_\mathbf{s}$ the set $\{{s}_{i}|1\leq i\leq m\}$.

Let $S_{m+n}$ be the symmetric group on $m+n$ letters generated by simple transpositions $\sigma_1, \ldots, \sigma_{m+n-1}$.  Each transposition $\sigma_j$ naturally acts on a sequence $\mathbf{i}$.  Note that if
$\mathbf{i} \in \mathrm{Seq}(\mathbf{s},n)$ it is not always the case that $\sigma_j.\mathbf{i} \in \mathrm{Seq}(\mathbf{s},n)$.

\begin{definition}
Webster algebra $W_n^{\mathbf{s}}$ of type $A_1$ is the $\Z$-graded $\Bbbk$-algebra generated by $e(\mathbf{i})$, where $\mathbf{i} \in \mathrm{Seq}(\mathbf{s},n)$, $x_j$, where $1\leq j\leq m+n$, and $\psi_\ell$, where $1\leq \ell \leq m+n-1$, satisfying the following relations.

\begin{minipage}{0.4\textwidth}
\begin{align}
& e(\mathbf{i})e(\mathbf{j}) = \delta_{\mathbf{i},\mathbf{j}}e(\mathbf{i})
\\
& x_j e(\mathbf{i})=e(\mathbf{i})x_j
\\
\label{dotonblack}
& x_j e(\mathbf{i} )=0 \quad \text{if $\mathbf{i}_j\in I_{\mathbf{s}}$}
\\
& \psi_j e(\mathbf{i}) = e(\sigma_j(\mathbf{i}))\psi_j
\end{align}
\end{minipage}
\begin{minipage}{0.6\textwidth}
\begin{align}
\label{red-red-crossing}
& \psi_j e(\mathbf{i}) = 0 \quad \text{if $\mathbf{i}_j,\mathbf{i}_{j+1}\in I_\mathbf{s}$}
\\
& x_j x_\ell=x_\ell x_j
\\
&\psi_j x_\ell = x_\ell\psi_j  \quad \text{if $\ell\not=j$ and $\ell\not=j+1$}
\\
& \psi_j\psi_\ell = \psi_\ell\psi_j \quad \text{if $|j - \ell| > 1$}
\end{align}
\end{minipage}
\begin{minipage}{\textwidth}
\begin{align}
&x_j \psi_j e(\mathbf{i}) - \psi_j x_{j+1} e(\mathbf{i})= \left\{
\begin{array}{ll}e(\mathbf{i}) & \text{$\mathbf{i}_j=\mathbf{i}_{j+1}=\mathbf{b}$}\\
0 & \text{$\mathbf{i}_j\in I_{\mathbf{s}},\, \mathbf{i}_{j+1}=\mathbf{b}$}
\end{array}
\right.
\\
&\psi_j x_j e(\mathbf{i}) - x_{j+1} \psi_j e(\mathbf{i})= \left\{
\begin{array}{ll}e(\mathbf{i}) & \text{$\mathbf{i}_j=\mathbf{i}_{j+1}=\mathbf{b}$}\\
0 & \text{$\mathbf{i}_j=\mathbf{b},\,\mathbf{i}_{j+1}\in I_{\mathbf{s}}$}
\end{array}
\right.
\\
&\psi_j^2 e(\mathbf{i})=\left\{
	\begin{array}{ll}
		0
		&\text{if } \mathbf{i}_j=\mathbf{i}_{j+1}={\mathfrak{b}},\\
		x_{j+1}^{\mathbf{i}_j} e(\mathbf{i})
		&\text{if } \mathbf{i}_j\in I_{\mathbf{s}}, \mathbf{i}_{j+1}={\mathfrak{b}},\\
		x_j^{\mathbf{i}_{j+1}}  e(\mathbf{i})
		&\text{if } \mathbf{i}_j={\mathfrak{b}}, \mathbf{i}_{j+1}\in I_{\mathbf{s}},
	\end{array}\right.
\\
& (\psi_j\psi_{j+1} \psi_j-\psi_{j+1}\psi_j \psi_{j+1}) e(\mathbf{i}) \\
& \qquad
=\left\{
	\begin{array}{ll}
	 \displaystyle \sum_{d_1+d_2=\mathbf{i}_{j+1}-1}x_j^{d_1}x_{j+2}^{d_2}e(\mathbf{i})
	&\text{if } \mathbf{i}_{j+1}\in I_{\mathbf{s}}, \mathbf{i}_{j}=\mathbf{i}_{j+2}={\mathfrak{b}},\\
	0
	&\text{otherwise},
	\end{array}\right.
	\\
\label{cyc}&e(\mathbf{i})=0 \qquad \text{if $\mathbf{i}_1=\mathfrak{b}$.}
\end{align}
\end{minipage}

The degrees of the generators are
\[
\deg x_i=2,\quad 
\deg \psi_j e(\mathbf{i})=
 \left\{
   \begin{array}{ll}
-2 & \hbox{if $\mathbf{i}_j=\mf{b}$, $\mathbf{i}_{j+1}=\mf{b}$,} \\
s & \hbox{if $\mathbf{i}_j=\mf{b}$, $\mathbf{i}_{j+1}=s\in I_{\mathbf{s}}$,} \\
s & \hbox{if $\mathbf{i}_j=s\in I_{\mathbf{s}}$, $\mathbf{i}_{j+1}=\mf{b}$.}
   \end{array}
 \right.
\]

\end{definition}

The generators $e(\mathbf{i})$ for $\mathbf{i} \in \mathrm{Seq}(\mathbf{s},n)$ represent the diagram consisting entirely of vertical strands whose $j$-th strand (counting from the left for each $j$) is black if $\mathbf{i}_j$ is $\mf{b}$ or thick red with $\mathbf{i}_j$-labeling if $\mathbf{i}_j$ is in $I_{\mathbf{s}}$.
Dots on black strands correspond to generators $x_j$.
A crossing of two strands corresponds to generators $\psi_j$.

The generators $x_j e(\mathbf{i})$ represents the dot on the $j$-th strand counting from the left of the $(m+n)$ strand diagram:
\begin{eqnarray*}
x_j e(\mathbf{i})&=&\hackcenter{\begin{tikzpicture}[scale=0.6]
    \draw[thick, ] (0,0) -- (0,1.5)  node[pos=.35, shape=coordinate](DOT){};
    \filldraw  (DOT) circle (2.5pt);
    \node at (-.75,.75) {$\cdots$};
    \node at (.75,.75) {$\cdots$};
\end{tikzpicture}}\quad  \hbox{if $\mathbf{i}_j=\mf{b}$.}
\end{eqnarray*}

The generator $\psi_j e(\mathbf{i})$ represents the crossing diagram as follows.
\[
\psi_j e(\mathbf{i})=
 \left\{
   \begin{array}{ll}
     \;
\hackcenter{\begin{tikzpicture}[scale=0.6]
    \draw[thick, ] (0,0) .. controls (0,.75) and (1.5,.75) .. (1.5,1.5);
    \draw[thick, ] (1.5,0) .. controls (1.5,.75) and (0,.75) .. (0,1.5);
    \node at (-.75,.75) {$\cdots$};
    \node at (2.25,.75) {$\cdots$};
\end{tikzpicture}} & \hbox{if $\mathbf{i}_j=\mf{b}$, $\mathbf{i}_{j+1}=\mf{b}$,} \\[2em]
\hackcenter{\begin{tikzpicture}[scale=0.6]
    \draw[thick, ] (0,0) .. controls (0,.75) and  (1.5,.75) .. (1.5,1.5);
    \draw[thick,red, double,  ] (1.5,0) .. controls (1.5,.75) and (0,.75) .. (0,1.5);
    \node at (1.5,-.25) {$\scs s$};
    \node at (-.75,.75) {$\cdots$};
    \node at (2.25,.75) {$\cdots$};
\end{tikzpicture}} & \hbox{if $\mathbf{i}_j=\mf{b}$, $\mathbf{i}_{j+1}=s\in I_{\mathbf{s}}$,} \\[2em]
\hackcenter{\begin{tikzpicture}[scale=0.6]
    \draw[thick,red, double,  ] (0,0) .. controls (0,.75) and  (1.5,.75) .. (1.5,1.5);
    \draw[thick, ] (1.5,0) .. controls (1.5,.75) and (0,.75) .. (0,1.5);
    \node at (0,-.25) {$\scs s$};
    \node at (-.75,.75) {$\cdots$};
    \node at (2.25,.75) {$\cdots$};
\end{tikzpicture}} & \hbox{if $\mathbf{i}_j=s\in I_{\mathbf{s}}$, $\mathbf{i}_{j+1}=\mf{b}$.}
   \end{array}
 \right.
\]

\begin{remark}
Remark that a black dot on any red strand and a red-red crossing do not appear by Relation \ref{dotonblack} and \ref{red-red-crossing}.
\end{remark}

The degrees of the generating diagrams are
\[
\deg
\left( \;
\hackcenter{\begin{tikzpicture}[scale=0.6]
    \draw[thick, ] (0,0) -- (0,1.5)  node[pos=.35, shape=coordinate](DOT){};
    \filldraw  (DOT) circle (2.5pt);
    \node at (0,-.25) {$\scs \;$};
    \node at (0,1.5) {$\scs \:$};
\end{tikzpicture}}\;
\right)
= 2,
\quad
\deg\left( \; \hackcenter{\begin{tikzpicture}[scale=0.6]
    \draw[thick, ] (0,0) .. controls (0,.75) and (.75,.75) .. (.75,1.5);
    \draw[thick, ] (.75,0) .. controls (.75,.75) and (0,.75) .. (0,1.5);
    \node at (0.5,-.25) {$\scs \;$};
    \node at (0.5,1.5) {$\scs \:$};
\end{tikzpicture}} \; \right) = -2,
\quad
\deg\left( \; \hackcenter{\begin{tikzpicture}[scale=0.6]
    \draw[thick, ] (0,0) .. controls (0,.75) and (.75,.75) .. (.75,1.5);
    \draw[thick,red, double,  ] (.75,0) .. controls (.75,.75) and (0,.75) .. (0,1.5);
    \node at (0.75,-.25) {$\scs s$};
    \node at (0,1.5) {$\scs \:$};
\end{tikzpicture}} \; \right) =
\deg\left( \; \hackcenter{\begin{tikzpicture}[scale=0.6]
    \draw[thick,red, double,  ] (0,0) .. controls (0,.75) and (.75,.75) .. (.75,1.5);
    \draw[thick, ] (.75,0) .. controls (.75,.75) and (0,.75) .. (0,1.5);
    \node at (0,-.25) {$\scs s$};
    \node at (0.75,1.5) {$\scs \:$};
\end{tikzpicture}} \; \right) = s.
\]

In the diagrammatic description, multiplication is vertical concatenation of diagrams.
The product is zero if the colors of the boundary points of one diagram do not match the colors of the boundary points of another diagram.
The diagrammatic description of the Webster algebra has local relations between generators which are the nilHecke relations among black strands
$$
\hackcenter{\begin{tikzpicture}[scale=0.75]
    \draw[thick] (0,0) .. controls ++(0,.375) and ++(0,-.375) .. (.75,.75);
    \draw[thick] (.75,0) .. controls ++(0,.375) and ++(0,-.375) .. (0,.75);
    \draw[thick] (0,.75 ) .. controls ++(0,.375) and ++(0,-.375) .. (.75,1.5);
    \draw[thick] (.75,.75) .. controls ++(0,.375) and ++(0,-.375) .. (0,1.5);
\end{tikzpicture}}
=0,
\qquad
\hackcenter{\begin{tikzpicture}[scale=0.75]
    \draw[thick,  ] (0,0) .. controls ++(0,1) and ++(0,-1) .. (1.2,2);
    \draw[thick] (.6,0) .. controls ++(0,.5) and ++(0,-.5) .. (0,1.0);
    \draw[thick] (0,1.0) .. controls ++(0,.5) and ++(0,-.5) .. (0.6,2);
    \draw[thick] (1.2,0) .. controls ++(0,1) and ++(0,-1) .. (0,2);
    \node at (0,2) {$\scs \:$};
\end{tikzpicture}}
\;\; = \;\;
\hackcenter{\begin{tikzpicture}[scale=0.75]
    \draw[thick] (0,0) .. controls ++(0,1) and ++(0,-1) .. (1.2,2);
    \draw[thick] (.6,0) .. controls ++(0,.5) and ++(0,-.5) .. (1.2,1.0);
    \draw[thick] (1.2,1.0) .. controls ++(0,.5) and ++(0,-.5) .. (0.6,2.0);
    \draw[thick] (1.2,0) .. controls ++(0,1) and ++(0,-1) .. (0,2.0);
    \node at (0,2) {$\scs \:$};
\end{tikzpicture}}
\quad,\qquad
\hackcenter{\begin{tikzpicture}[scale=0.75]
    \draw[thick] (0,0) .. controls (0,.75) and (.75,.75) .. (.75,1.5)
        node[pos=.25, shape=coordinate](DOT){};
    \draw[thick] (.75,0) .. controls (.75,.75) and (0,.75) .. (0,1.5);
    \filldraw  (DOT) circle (2.5pt);
\end{tikzpicture}}
\quad -\quad
\hackcenter{\begin{tikzpicture}[scale=0.75]
    \draw[thick] (0,0) .. controls (0,.75) and (.75,.75) .. (.75,1.5)
        node[pos=.75, shape=coordinate](DOT){};
    \draw[thick] (.75,0) .. controls (.75,.75) and (0,.75) .. (0,1.5);
    \filldraw  (DOT) circle (2.5pt);
\end{tikzpicture}}
\quad =\quad
\hackcenter{\begin{tikzpicture}[scale=0.75]
    \draw[thick] (0,0) -- (0,1.5);
    \draw[thick] (.75,0) -- (0.75,1.5);
\end{tikzpicture}}
\quad =\quad
\hackcenter{\begin{tikzpicture}[scale=0.75]
    \draw[thick] (0,0) .. controls (0,.75) and (.75,.75) .. (.75,1.5)
        node[pos=.75, shape=coordinate](DOT){};
    \draw[thick] (.75,0) .. controls (.75,.75) and (0,.75) .. (0,1.5);
    \filldraw  (DOT) circle (2.5pt);
\end{tikzpicture}}
\quad -\quad
\hackcenter{\begin{tikzpicture}[scale=0.75]
    \draw[thick] (0,0) .. controls (0,.75) and (.75,.75) .. (.75,1.5)
        node[pos=.25, shape=coordinate](DOT){};
    \draw[thick] (.75,0) .. controls (.75,.75) and (0,.75) .. (0,1.5);
    \filldraw  (DOT) circle (2.5pt);
\end{tikzpicture}}
$$
and local relations among red-black strands
\[
\hackcenter{\begin{tikzpicture}[scale=0.8]
    \draw[thick] (0,0) .. controls ++(0,.375) and ++(0,-.375) .. (.75,.75);
    \draw[thick,red, double, ] (.75,0) .. controls ++(0,.375) and ++(0,-.375) .. (0,.75);
    \draw[ thick,red, double, ] (0,.75 ) .. controls ++(0,.375) and ++(0,-.375) .. (.75,1.5);
    \draw[thick,  ] (.75,.75) .. controls ++(0,.375) and ++(0,-.375) .. (0,1.5);
    \node at (0.75,-.25) {$\scs s$};
    \node at (0,1.75) {$\scs \:$};
\end{tikzpicture}}
 \;\; = \;\;
\hackcenter{\begin{tikzpicture}[scale=0.8]
    \draw[thick, ] (0,0) -- (0,1.5)  node[pos=.55, shape=coordinate](DOT){};
    \filldraw  (DOT) circle (2.5pt);
    \draw[thick,red, double,  ] (.75,0) -- (.75,1.5);
    \node at (0.75,-.25) {$\scs s$};
    \node at (0,1.75) {$\scs \:$};
    \node at (-.25,.75) {$\scs s$};
\end{tikzpicture}}
\quad,\qquad
\hackcenter{\begin{tikzpicture}[scale=0.8]
    \draw[thick, red, double] (0,0) .. controls ++(0,.375) and ++(0,-.375) .. (.75,.75);
    \draw[thick ] (.75,0) .. controls ++(0,.375) and ++(0,-.375) .. (0,.75);
    \draw[ thick, ] (0,.75 ) .. controls ++(0,.375) and ++(0,-.375) .. (.75,1.5);
    \draw[thick,red, double,  ] (.75,.75) .. controls ++(0,.375) and ++(0,-.375) .. (0,1.5);
    \node at (0,-.25) {$\scs s$};
    \node at (0,1.75) {$\scs \:$};
\end{tikzpicture}}
 \;\; = \;\;
\hackcenter{\begin{tikzpicture}[scale=0.8]
    \draw[thick, ] (.75,0) -- (.75,1.5)  node[pos=.55, shape=coordinate](DOT){};
    \filldraw  (DOT) circle (2.5pt);
    \draw[thick,red, double,  ] (0,0) -- (0,1.5);
    \node at (0,-.25) {$\scs s$};
    \node at (0,1.75) {$\scs \:$};
    \node at (1,.75) {$\scs s$};
\end{tikzpicture}}
\quad,\qquad
\hackcenter{\begin{tikzpicture}[scale=0.8]
    \draw[thick, ] (0,0) .. controls (0,.75) and (.75,.75) .. (.75,1.5)
        node[pos=.25, shape=coordinate](DOT){};
    \draw[thick,red, double,  ] (.75,0) .. controls (.75,.75) and (0,.75) .. (0,1.5);
    \filldraw  (DOT) circle (2.5pt);
    \node at (0.75,-.25) {$\scs s$};
    \node at (0,1.75) {$\scs \:$};
\end{tikzpicture}}
\quad =\quad
\hackcenter{\begin{tikzpicture}[scale=0.8]
    \draw[thick, ] (0,0) .. controls (0,.75) and (.75,.75) .. (.75,1.5)
        node[pos=.75, shape=coordinate](DOT){};
    \draw[thick,red, double,  ] (.75,0) .. controls (.75,.75) and (0,.75) .. (0,1.5);
    \filldraw  (DOT) circle (2.5pt);
    \node at (0.75,-.25) {$\scs s$};
    \node at (0,1.75) {$\scs \:$};
\end{tikzpicture}}
\quad,\qquad
\hackcenter{\begin{tikzpicture}[scale=0.8]
    \draw[thick,red, double,  ] (0,0) .. controls (0,.75) and (.75,.75) .. (.75,1.5);
    \draw[thick,  ] (.75,0) .. controls (.75,.75) and (0,.75) .. (0,1.5)
        node[pos=.75, shape=coordinate](DOT){};
    \filldraw  (DOT) circle (2.75pt);
    \node at (0,-.25) {$\scs s$};
    \node at (0,1.75) {$\scs \:$};
\end{tikzpicture}}
\quad=\quad
\hackcenter{\begin{tikzpicture}[scale=0.8]
    \draw[thick,red, double,  ] (0,0) .. controls (0,.75) and (.75,.75) .. (.75,1.5);
    \draw[thick, ] (.75,0) .. controls (.75,.75) and (0,.75) .. (0,1.5)
        node[pos=.25, shape=coordinate](DOT){};
      \filldraw  (DOT) circle (2.75pt);
    \node at (0,-.25) {$\scs s$};
    \node at (0,1.75) {$\scs \:$};
\end{tikzpicture}}
\]

\[
\hackcenter{\begin{tikzpicture}[scale=0.8]
    \draw[thick,red, double,  ] (0,0) .. controls ++(0,1) and ++(0,-1) .. (1.2,2);
    \draw[thick, ] (.6,0) .. controls ++(0,.5) and ++(0,-.5) .. (0,1.0);
    \draw[thick, ] (0,1.0) .. controls ++(0,.5) and ++(0,-.5) .. (0.6,2);
    \draw[thick, ] (1.2,0) .. controls ++(0,1) and ++(0,-1) .. (0,2);
    \node at (0,-.25) {$\scs s$};
    \node at (0,2) {$\scs \:$};
\end{tikzpicture}}
\;\; = \;\;
\hackcenter{\begin{tikzpicture}[scale=0.8]
    \draw[thick,red, double,  ] (0,0) .. controls ++(0,1) and ++(0,-1) .. (1.2,2);
    \draw[thick, ] (.6,0) .. controls ++(0,.5) and ++(0,-.5) .. (1.2,1.0);
    \draw[thick, ] (1.2,1.0) .. controls ++(0,.5) and ++(0,-.5) .. (0.6,2.0);
    \draw[thick, ] (1.2,0) .. controls ++(0,1) and ++(0,-1) .. (0,2.0);
    \node at (0,-.25) {$\scs s$};
    \node at (0,2) {$\scs \:$};
\end{tikzpicture}}
\quad,\qquad
\hackcenter{\begin{tikzpicture}[scale=0.8]
    \draw[thick,  ] (0,0) .. controls ++(0,1) and ++(0,-1) .. (1.2,2);
    \draw[thick, ] (.6,0) .. controls ++(0,.5) and ++(0,-.5) .. (0,1.0);
    \draw[thick, ] (0,1.0) .. controls ++(0,.5) and ++(0,-.5) .. (0.6,2);
    \draw[thick,red, double,  ] (1.2,0) .. controls ++(0,1) and ++(0,-1) .. (0,2);
    \node at (1.2,-.25) {$\scs s$};
    \node at (0,2) {$\scs \:$};
\end{tikzpicture}}
\;\; = \;\;
\hackcenter{\begin{tikzpicture}[scale=0.8]
    \draw[thick,  ] (0,0) .. controls ++(0,1) and ++(0,-1) .. (1.2,2);
    \draw[thick, ] (.6,0) .. controls ++(0,.5) and ++(0,-.5) .. (1.2,1.0);
    \draw[thick, ] (1.2,1.0) .. controls ++(0,.5) and ++(0,-.5) .. (0.6,2.0);
    \draw[thick,red, double,  ] (1.2,0) .. controls ++(0,1) and ++(0,-1) .. (0,2.0);
    \node at (1.2,-.25) {$\scs s$};
    \node at (0,2) {$\scs \:$};
\end{tikzpicture}}
\]

\[
\hackcenter{\begin{tikzpicture}[scale=0.8]
    \draw[thick,  ] (0,0) .. controls ++(0,1) and ++(0,-1) .. (1.2,2);
    \draw[thick,red, double,  ] (.6,0) .. controls ++(0,.5) and ++(0,-.5) .. (0,1.0);
    \draw[thick,red, double,  ] (0,1.0) .. controls ++(0,.5) and ++(0,-.5) .. (0.6,2);
    \draw[thick, ] (1.2,0) .. controls ++(0,1) and ++(0,-1) .. (0,2);
    \node at (0.6,-.25) {$\scs s$};
    \node at (0,2) {$\scs \:$};
\end{tikzpicture}}
\;\; - \;\;
\hackcenter{\begin{tikzpicture}[scale=0.8]
    \draw[thick,  ] (0,0) .. controls ++(0,1) and ++(0,-1) .. (1.2,2);
    \draw[thick, red, double, ] (.6,0) .. controls ++(0,.5) and ++(0,-.5) .. (1.2,1.0);
    \draw[thick,red, double,  ] (1.2,1.0) .. controls ++(0,.5) and ++(0,-.5) .. (0.6,2.0);
    \draw[thick,  ] (1.2,0) .. controls ++(0,1) and ++(0,-1) .. (0,2.0);
    \node at (0.6,-.25) {$\scs s$};
    \node at (0,2) {$\scs \:$};
\end{tikzpicture}}
\;\; = \;\; 
\sum_{a+b=s-1}
\hackcenter{\begin{tikzpicture}[scale=0.8]
    \draw[thick, ] (0,0) -- (0,2)  node[pos=.5, shape=coordinate](DOT){};
    \draw[thick,red, double,  ] (.6,0) --  (.6,2);
    \draw[thick, ] (1.2,0) -- (1.2,2)  node[pos=.5, shape=coordinate](DOT1){};
    \node at (0.6,-.25) {$\scs s$};
    \node at (0,2) {$\scs \:$};
     \filldraw  (DOT) circle (2.75pt);
     \filldraw (DOT1) circle (2.75pt);
    \node at (-.25,1) {$\scs a$};
    \node at (1.45,1) {$\scs b$};
\end{tikzpicture}}
\]
and the cyclotomic relation \eqref{cyc} that a diagram equals to zero if a black strand appears at the most left strand of the diagram
\[
\hackcenter{\begin{tikzpicture}[scale=0.6]
    \draw[thick, ] (0,0) -- (0,1.5);
    \node at (.75,.75) {$\cdots$};
\end{tikzpicture}}
=0.
\]

The $p$-derivation $\partial_W$ on $W_n^{\mathbf{s}}$ is introduced in \cite{KQ}.

The Webster algebra $W^{\mathbf{s}}_n$ is a $p$-DG algebra with the following $p$-derivation for generators.

\begin{eqnarray}
&&\partial_W(x_i)=x_i^2,\\
&&\partial_W(\psi_j e(\mathbf{i}))=\left\{
\begin{array}{ll}
-x_j \psi_j e(\mathbf{i})- \psi_j x_{j+1}e(\mathbf{i}) &\text{if $\mathbf{i}_j=\mathbf{i}_{j+1}=\mathbf{b}$},\\
s \psi_j x_{j+1} e(\mathbf{i}) &\text{if $\mathbf{i}_j=s\in I_{\mathbf{s}},\mathbf{i}_{j+1}=\mathbf{b}$},\\
0 &\text{if $\mathbf{i}_j=\mathbf{b},\mathbf{i}_{j+1}=s\in I_{\mathbf{s}}$}.
\end{array}
\right.
\end{eqnarray}

By Leibniz rule, we have
\[
\partial_W^p(\psi_j e(\mathbf{i}))=
\prod_{a=0}^{p-1} (s+a) \psi_j x_{j+1}^p e(\mathbf{i})=0 \qquad\text{if $\mathbf{i}_j=s\in I_{\mathbf{s}},\mathbf{i}_{j+1}=\mathbf{b}$}.
\]

Therefore, the derivation $\partial_W$ is $p$-nilpotent for generators.
Using Lemma \ref{lemma}, we have $\partial_W^p=0$ on the Webster algebra.

Moreover, we find that the derivation $\partial_W$ preserves the relations of the Webster algebra.
Hence, we have the following statement.
\begin{proposition}
The Webster algebra of type $A_1$ with the $p$-derivation $(W_n^{\mathbf{s}},\partial_W)$ is a $p$-DG algebra.
\end{proposition}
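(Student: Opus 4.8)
The plan is to establish the two defining properties of a $p$-DG algebra separately: first that $\partial_W$ is a well-defined derivation of $W^{\mathbf{s}}_n$, i.e.\ that it descends from the free algebra on the generators to the quotient by the Webster relations, and second that it is $p$-nilpotent. The second property is almost entirely contained in the computations above. By Lemma \ref{lemma}, $\partial_W^p$ again satisfies the Leibniz rule, so it suffices to check $\partial_W^p=0$ on the generators $e(\mathbf{i})$, $x_j$ and $\psi_\ell e(\mathbf{i})$. On idempotents $\partial_W$ vanishes, so there is nothing to check; on a dot we have $\partial_W^p(x_j)=p!\,x_j^{p+1}=0$ exactly as in the polynomial ring; on a black-black crossing the formula for $\partial_W$ coincides with the nilHecke derivation $\partial_{\mathrm{NH}}$, so $p$-nilpotency there is Proposition \ref{nilhecke}; on a black-red crossing ($\mathbf{i}_j=\mathbf{b}$, $\mathbf{i}_{j+1}=s$) the element is already killed after one step; and on a red-black crossing ($\mathbf{i}_j=s$, $\mathbf{i}_{j+1}=\mathbf{b}$) the iteration $\partial_W(\psi_j x_{j+1}^k e(\mathbf{i}))=(s+k)\psi_j x_{j+1}^{k+1}e(\mathbf{i})$ gives $\partial_W^p(\psi_j e(\mathbf{i}))=\prod_{a=0}^{p-1}(s+a)\,\psi_j x_{j+1}^p e(\mathbf{i})$, and the scalar $\prod_{a=0}^{p-1}(s+a)$ is a product of $p$ consecutive integers, hence divisible by $p$ and zero in $\Bbbk$.

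The first property is the substantive part. I would verify, relation by relation, that $\partial_W$ sends each Webster relation to a consequence of the relations, i.e.\ to zero in $W^{\mathbf{s}}_n$. The degree-$0$ and ``commuting'' relations---$e(\mathbf{i})e(\mathbf{j})=\delta_{\mathbf{i},\mathbf{j}}e(\mathbf{i})$, $x_je(\mathbf{i})=e(\mathbf{i})x_j$, $\psi_je(\mathbf{i})=e(\sigma_j(\mathbf{i}))\psi_j$, the far-commutativity of dots and crossings, relation \eqref{dotonblack} and relation \eqref{red-red-crossing}---are immediate once one notes that $\partial_W$ respects the idempotent decomposition and annihilates idempotents; for \eqref{dotonblack}, applying $\partial_W$ to $x_j e(\mathbf{i})=0$ gives $x_j^2 e(\mathbf{i})$, which again vanishes by \eqref{dotonblack}, and the cyclotomic relation \eqref{cyc} is preserved for the same reason. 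The dot-slide relations $x_j\psi_j e(\mathbf{i})-\psi_j x_{j+1}e(\mathbf{i})=e(\mathbf{i})$ (or $0$) and $\psi_j x_j e(\mathbf{i})-x_{j+1}\psi_j e(\mathbf{i})=e(\mathbf{i})$ (or $0$) together with the quadratic relations for $\psi_j^2 e(\mathbf{i})$ require a genuine computation: one expands $\partial_W$ of the left-hand side by the Leibniz rule, inserts the defining formulas for $\partial_W(x_j)$ and $\partial_W(\psi_j e(\mathbf{i}))$ in the relevant coloring, and then repeatedly reduces the resulting words using the dot-slide and quadratic relations themselves, checking that the outcome equals $\partial_W$ of the right-hand side.

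The main obstacle will be the braid-type relation, namely $(\psi_j\psi_{j+1}\psi_j-\psi_{j+1}\psi_j\psi_{j+1})e(\mathbf{i})$ with its polynomial correction term $\sum_{d_1+d_2=\mathbf{i}_{j+1}-1}x_j^{d_1}x_{j+2}^{d_2}e(\mathbf{i})$ in the coloring $\mathbf{i}_{j+1}=s\in I_{\mathbf{s}}$, $\mathbf{i}_j=\mathbf{i}_{j+2}=\mathbf{b}$. Applying $\partial_W$ to the left-hand side produces a sum of six triple-crossing words, each carrying a single dot inserted by the Leibniz rule and the red-black formula $\partial_W(\psi e)=s\,\psi x e$; these must be reorganized, using the dot-slide and quadratic relations, into the image under $\partial_W$ of the correction term, which by $\partial_W(x_k)=x_k^2$ equals $\sum_{d_1+d_2=s-1}(d_1 x_j^{d_1+1}x_{j+2}^{d_2}+d_2 x_j^{d_1}x_{j+2}^{d_2+1})e(\mathbf{i})$. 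Matching these two expressions is the one place where the precise value $s$ of the degree shift and the combinatorics of the correction sum must conspire, and the analogous coloring-dependent bookkeeping for the remaining mixed braid relations (where the right-hand side is $0$) is the rest of the work. Once every relation is checked, $\partial_W$ is a well-defined $p$-nilpotent derivation of degree $2$, and $(W^{\mathbf{s}}_n,\partial_W)$ is a $p$-DG algebra.
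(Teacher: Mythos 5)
Your proposal follows the same route as the paper: check $p$-nilpotency generator by generator (with exactly the same computations, including the product $\prod_{a=0}^{p-1}(s+a)$ of $p$ consecutive integers vanishing mod $p$ for the red--black crossing, and the reduction to generators via Lemma \ref{lemma}), and then verify that $\partial_W$ is compatible with the defining relations. The paper in fact only asserts the relation-preservation step without carrying it out, so your outline of that verification is, if anything, slightly more detailed than the source.
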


\section{Deformed Webster algebra $D({\bf s},n)$ and splitter bimodules}
A deformation of non-cyclotomic Webster algebras of type $A_1$ for ${\bf s}=(1^m)$ is introduced by Khovanov and Sussan\cite{KS} and the deformed Webster algebra is generalized for the general ${\bf s}$ in \cite{KLSY}.
In this section, we recall the deformed Webster algebras $D({\bf s},n)$, their splitter bimodules, and these diagrammatic description defined in \cite{KLSY}.
In the next section, we will define a $p$-DG structure on the deformed Webster algebra of type $A_1$ and splitter bimodules.
\subsection{Definition of $D({\bf s},n)$}
Let $m\geq 0$ be an integer.
For a sequence of non-negative integers $\mathbf{s}=(s_{1},...,s_{m})$, let $\mathrm{Seq}(\mathbf{s},n)$ be the set of all sequences $\mathbf{i}=(i_1,...,i_{m+n})$ in which $n$ of the entries are $\mf{b}$ and ${s}_{i}$ appears exactly once and in the order in which it appears in $\mathbf{s}$.
Denote by $\mathbf{i}_j$ the $j$-th entry of $\mathbf{i}$ and denote by $I_\mathbf{s}$ the set $\{{s}_{i}|1\leq i\leq m\}$.

Let $S_{m+n}$ be the symmetric group on $m+n$ letters generated by simple transpositions $\sigma_1, \ldots, \sigma_{m+n-1}$.  Each transposition $\sigma_j$ naturally acts on a sequence $\mathbf{i}$.  Note that if
$\mathbf{i} \in \mathrm{Seq}(\mathbf{s},n)$ it is not always the case that $\sigma_j.\mathbf{i} \in \mathrm{Seq}(\mathbf{s},n)$.

Let $\Bbbk$ be a field of characteristic $p > 0$.
$D(\mathbf{s},n)$ is the $\Z$-graded $\Bbbk$-algebra generated by $e(\mathbf{i})$, where $\mathbf{i} \in \mathrm{Seq}(\mathbf{s},n)$, $x_j$, $E(d)_j$, where $1\leq j\leq m+n$, $d\geq 1$, and $\psi_\ell$, where $1\leq \ell \leq m+n-1$, satisfying the relations below. For convenience, we use the notation $E(0)_j=1$.

\begin{minipage}{0.4\textwidth}
\begin{align}
& e(\mathbf{i})e(\mathbf{j}) = \delta_{\mathbf{i},\mathbf{j}}e(\mathbf{i})
\\
& E(d)_j e(\mathbf{i})=e(\mathbf{i})E(d)_j
\\
& E(d)_j e(\mathbf{i})=0 \quad \text{if $\mathbf{i}_j=\mathfrak{b}$}
\\
&E(d)_j e({\bf i})=0 \quad \text{if $d > {\bf i}_j \in I_{\mathbf{s}}$}
\\
& x_j e(\mathbf{i})=e(\mathbf{i})x_j
\\
& x_j e(\mathbf{i} )=0 \quad \text{if $\mathbf{i}_j\in I_{\mathbf{s}}$}
\\
& \psi_j e(\mathbf{i}) = e(\sigma_j(\mathbf{i}))\psi_j
\\
& \psi_j e(\mathbf{i}) = 0 \quad \text{if $\mathbf{i}_j,\mathbf{i}_{j+1}\in I_\mathbf{s}$}
\end{align}
\end{minipage}
\begin{minipage}{0.6\textwidth}
\begin{align}
& x_j x_\ell=x_\ell x_j
\\
& E(d)_j x_\ell=x_\ell E(d)_j
\\
& E(d)_j E(d')_\ell=E(d')_\ell E(d)_j
\\
&\psi_j x_\ell = x_\ell\psi_j  \quad \text{if $\ell\not=j$ and $\ell\not=j+1$}
\\
&\psi_j E(d)_\ell = E(d)_\ell\psi_j  \quad \text{if $\ell\not=j$ and $\ell\not=j+1$}
\\
& \psi_j E(d)_j  = E(d)_{j+1} \psi_j
\\
& E(d)_j \psi_j = \psi_j E(d)_{j+1}
\\
& \psi_j\psi_\ell = \psi_\ell\psi_j \quad \text{if $|j - \ell| > 1$}
\end{align}
\end{minipage}
\begin{minipage}{\textwidth}
\begin{align}
\label{nil-rel1}
&x_j \psi_j e(\mathbf{i}) - \psi_j x_{j+1} e(\mathbf{i})= \left\{
\begin{array}{ll}e(\mathbf{i}) & \text{$\mathbf{i}_j=\mathbf{i}_{j+1}=\mathbf{b}$}\\
0 & \text{$\mathbf{i}_j\in I_{\mathbf{s}},\, \mathbf{i}_{j+1}=\mathbf{b}$}
\end{array}
\right.
\\
&\psi_j x_j e(\mathbf{i}) - x_{j+1} \psi_j e(\mathbf{i})= \left\{
\begin{array}{ll}e(\mathbf{i}) & \text{$\mathbf{i}_j=\mathbf{i}_{j+1}=\mathbf{b}$}\\
0 & \text{$\mathbf{i}_j=\mathbf{b},\,\mathbf{i}_{j+1}\in I_{\mathbf{s}}$}
\end{array}
\right.
\\
\label{rel:r2}
&\psi_j^2 e(\mathbf{i})=\left\{
	\begin{array}{ll}
		0
		&\text{if } \mathbf{i}_j=\mathbf{i}_{j+1}={\mathfrak{b}},\\
		\displaystyle \sum_{d=0}^{\mathbf{i}_j}(-1)^{d}E(d)_j x_{j+1}^{\mathbf{i}_j-d} e(\mathbf{i})
		&\text{if } \mathbf{i}_j\in I_{\mathbf{s}}, \mathbf{i}_{j+1}={\mathfrak{b}},\\
		\displaystyle \sum_{d=0}^{\mathbf{i}_{j+1}}(-1)^{d} x_j^{\mathbf{i}_{j+1}-d} E(d)_{j+1} e(\mathbf{i})
		&\text{if } \mathbf{i}_j={\mathfrak{b}}, \mathbf{i}_{j+1}\in I_{\mathbf{s}},
	\end{array}\right.
\\
\label{rel:r3}
& (\psi_j\psi_{j+1} \psi_j-\psi_{j+1}\psi_j \psi_{j+1}) e(\mathbf{i}) \\
\nonumber
& \qquad
=\left\{
	\begin{array}{ll}
	 \displaystyle \sum_{d_1+d_2+d_3=\mathbf{i}_{j+1}-1}(-1)^{d_3}x_j^{d_1}E(d_3)_{j+1}x_{j+2}^{d_2}e(\mathbf{i})
	&\text{if } \mathbf{i}_{j+1}\in I_{\mathbf{s}}, \mathbf{i}_{j}=\mathbf{i}_{j+2}={\mathfrak{b}},\\
	0
	&\text{otherwise},
	\end{array}\right.
\end{align}
\end{minipage}

The degrees of the generators are
\[
\deg x_i=2,\quad 
\deg E(d)_j=2d,\quad 
\deg \psi_j e(\mathbf{i})=
 \left\{
   \begin{array}{ll}
-2 & \hbox{if $\mathbf{i}_j=\mf{b}$, $\mathbf{i}_{j+1}=\mf{b}$,} \\
s & \hbox{if $\mathbf{i}_j=\mf{b}$, $\mathbf{i}_{j+1}=s\in I_{\mathbf{s}}$,} \\
s & \hbox{if $\mathbf{i}_j=s\in I_{\mathbf{s}}$, $\mathbf{i}_{j+1}=\mf{b}$.}
   \end{array}
 \right.
\]


\subsection{Inclusion map}

For a sequence of $m$ non-negative integers $\mathbf{s} = (s_{1}, s_{2}, \dots , s_{m})$, we define the sequence $\mathbf{s}^j$, where $j=1,\ldots, m-1$, by the sequence of $m-1$ integers obtained from $\mathbf{s}$ by replacing the pair $(s_{j}, s_{j+1})$ with the singleton $s_{j}+s_{j+1}$ and we define the sequence $\phi_{j,a}(\mathbf{s})$, where $j=1,...,m$ and $0\leq a\leq s_{j}$, by the sequence of $m+1$ integers obtained from $\mathbf{s}$ by replacing the integer $s_j$ with the pair $(s_j-a,a)$:
\begin{eqnarray*}
\mathbf{s}^{j} &=& (s_{1}, \dots, s_{j-1},  s_{j}+  s_{j+1}, s_{j+2}, \dots, s_{m}),\\
\phi_{j,a}(\mathbf{s})&=&(s_1,\ldots,s_{j-1},s_{j}-a,a,s_{j+1},\ldots, s_m).
\end{eqnarray*}
Note that we have $\phi_{j,s_{j+1}}(\mathbf{s}^j)=\mathbf{s}$.

The map $\phi_{j,a}$ extends to a map from $\mathrm{Seq}(\mathbf{s},n)$ to $\mathrm{Seq}(\phi_{j,a}(\mathbf{s}),n)$ by replacing the integer $s_j$ in $\mathbf{i}$ with the pair $(s_j-a,a)$.
When $\mathbf{i}_\ell=s_j$,
\begin{eqnarray*}
\phi_{j,a}(\mathbf{i})&=& (...\mathbf{i}_{\ell-1},s_{j}-a,a,\mathbf{i}_{\ell+1},...)\in \mathrm{Seq}(\phi_{j,a}(\mathbf{s}),n).
\end{eqnarray*}

We define an inclusion map of algebras
\begin{equation}
 \Phi_{j,a} : D(\mathbf{s},n) \to D(\phi_{j,a}(\mathbf{s}), n)
\end{equation}
determined by sending idempotents $e(\mathbf{i})$ for $\mathrm{Seq}(\mathbf{s},n)$ by
\begin{equation*}
\Phi_{j,a}(e(\mathbf{i})) = e(\phi_{j,a}(\mathbf{i}))
\end{equation*}

For generators $E(d)_\ell$, $\Phi_{j,a}$ is defined by
\begin{equation*}
\Phi_{j,a}(E(d)_\ell e(\mathbf{i})) =
\begin{cases}
E(d)_\ell e(\phi_{j,a}(\mathbf{i}))& \text{ if } \mathbf{i}_{\ell'}=s_j, \ell<\ell' \\
E(d)_{\ell+1} e(\phi_{j,a}(\mathbf{i}))& \text{ if } \mathbf{i}_{\ell'}=s_j, \ell'<\ell \\
\sum_{d_1+d_2=d}E(d_1)_{\ell}E(d_2)_{\ell+1} e(\phi_{j,a}(\mathbf{i}))& \text{ if } \mathbf{i}_{\ell'}=s_j, \ell'=\ell, 1\leq d\leq s_j
\end{cases}
\end{equation*}

For generators $x_\ell$, $\Phi_{j,a}$ is defined by
\begin{equation*}
\Phi_{j,a}(x_\ell e(\mathbf{i})) =
\begin{cases}
x_\ell e(\phi_{j,a}(\mathbf{i}))& \text{ if } \mathbf{i}_{\ell'}=s_j, \ell<\ell' \\
x_{\ell+1} e(\phi_{j,a}(\mathbf{i}))& \text{ if } \mathbf{i}_{\ell'}=s_j, \ell'<\ell
\end{cases}
\end{equation*}

For generators $\psi_{\ell}$, $\Phi_{j,a}$ is defined by
\begin{equation*}
\Phi_{j,a}(\psi_\ell e(\mathbf{i})) =
\begin{cases}
\psi_{\ell} e(\phi_{j,a}(\mathbf{i})) & \text{ if } \mathbf{i}_{\ell'}=s_j, \ell< \ell'-1 \\
\psi_{\ell+1}\psi_{\ell} e(\phi_{j,a}(\mathbf{i})) & \text{ if } \mathbf{i}_{\ell'}=s_j,\mathbf{i}_{\ell'-1}=\mf{b}, \ell= \ell'-1 \\
\psi_{\ell}\psi_{\ell+1} e(\phi_{j,a}(\mathbf{i})) & \text{ if } \mathbf{i}_{\ell'}=s_j,\mathbf{i}_{\ell'+1}=\mf{b}, \ell= \ell' \\
\psi_{\ell+1} e(\phi_{j,a}(\mathbf{i})) & \text{ if } \mathbf{i}_{\ell'}=s_j, \ell> \ell'
\end{cases}
\end{equation*}

\subsection{Splitter bimodules}
Let $\mathrm{Seq}^j(\mathbf{s},n)$ be a subset of $\mathrm{Seq}(\mathbf{s},n)$ composed of sequences $\mathbf{i}=(i_1,...,i_{m+n})$ in which $s_j$ is neighbor to $s_{j+1}$, that is there is $1\leq k\leq m+n-1$ such that $\mathbf{i}_k=s_j$ and $\mathbf{i}_{k+1}=s_{j+1}$, and put
\[
\hat{e_j}=\sum_{\mathbf{i}\in \mathrm{Seq}^j(\mathbf{s},n)}e(\mathbf{i}).
\]

First note that the inclusion $\Phi_{j,s_{j+1}}$ determines the left action of $D(\mathbf{s}^j,n)$ on $\hat{e_j}D(\mathbf{s},n)$ and the right action on $D(\mathbf{s},n)\hat{e_j}$.
Define the $(D(\mathbf{s}^j,n) , D(\mathbf{s},n))$-bimodule $\bigtriangleup^j(\mathbf{s})$ by
\begin{equation}
 \bigtriangleup^j(\mathbf{s}) := D(\mathbf{s}^j,n) \otimes_{D(\mathbf{s}^j,n)} \hat{e_j}D(\mathbf{s},n)\{-s_j \cdot s_{j+1}\}
\end{equation}
and the $(D(\mathbf{s},n),D(\mathbf{s}^j,n))$-bimodule $\bigtriangledown^j(\mathbf{s})$ by
\begin{equation}
 \bigtriangledown^j(\mathbf{s}) := D(\mathbf{s},n)\hat{e_j} \otimes_{D(\mathbf{s}^j,n)} D(\mathbf{s}^j,n).
\end{equation}

\subsection{Diagrammatic description}

The generators $e(\mathbf{i})$ for $\mathbf{i} \in \mathrm{Seq}(\mathbf{s},n)$ represent the diagram consisting entirely of vertical strands whose $j$-th strand (counting from the left for each $j$) is black if $\mathbf{i}_j$ is $\mf{b}$ or thick red with $\mathbf{i}_j$-labeling if $\mathbf{i}_j$ is in $I_{\mathbf{s}}$.

The generators $x_j e(\mathbf{i})$ and $E(d)_j e(\mathbf{i})$ represent the dot on the $j$-th strand counting from the left of the $(m+n)$ strand diagram:
\begin{eqnarray*}
x_j e(\mathbf{i})&=&\hackcenter{\begin{tikzpicture}[scale=0.6]
    \draw[thick, ] (0,0) -- (0,1.5)  node[pos=.35, shape=coordinate](DOT){};
    \filldraw  (DOT) circle (2.5pt);
\end{tikzpicture}}\quad  \hbox{if $\mathbf{i}_j=\mf{b}$,} \\
E(d)_j e(\mathbf{i})&=&
\hackcenter{\begin{tikzpicture}[scale=0.6]
    \draw[thick,red, double,  ] (0,0) to (0,1.5);
 \filldraw[red]  (0,.5) circle (2.75pt);
    \node at (0,-.25) {$\scs s$};
    \node at (-.5,.5) {$\scs E_d$};
\end{tikzpicture}}\quad  \hbox{if $\mathbf{i}_j=s$ and $d\leq s$.}
\end{eqnarray*}

The generator $\psi_j e(\mathbf{i})$ represents the crossing diagram of $(j,j+1)$ strands as follows.
\[
\psi_j e(\mathbf{i})=
 \left\{
   \begin{array}{ll}
     \;
\hackcenter{\begin{tikzpicture}[scale=0.6]
    \draw[thick, ] (0,0) .. controls (0,.75) and (1.5,.75) .. (1.5,1.5);
    \draw[thick, ] (1.5,0) .. controls (1.5,.75) and (0,.75) .. (0,1.5);
\end{tikzpicture}}, & \hbox{if $\mathbf{i}_j=\mf{b}$, $\mathbf{i}_{j+1}=\mf{b}$;} \\[2em]
\hackcenter{\begin{tikzpicture}[scale=0.6]
    \draw[thick, ] (0,0) .. controls (0,.75) and  (1.5,.75) .. (1.5,1.5);
    \draw[thick,red, double,  ] (1.5,0) .. controls (1.5,.75) and (0,.75) .. (0,1.5);
    \node at (1.5,-.25) {$\scs s$};
\end{tikzpicture}}, & \hbox{if $\mathbf{i}_j=\mf{b}$, $\mathbf{i}_{j+1}=s\in I_{\mathbf{s}}$;} \\[2em]
\hackcenter{\begin{tikzpicture}[scale=0.6]
    \draw[thick,red, double,  ] (0,0) .. controls (0,.75) and  (1.5,.75) .. (1.5,1.5);
    \draw[thick, ] (1.5,0) .. controls (1.5,.75) and (0,.75) .. (0,1.5);
    \node at (0,-.25) {$\scs s$};
\end{tikzpicture}}, & \hbox{if $\mathbf{i}_j=s\in I_{\mathbf{s}}$, $\mathbf{i}_{j+1}=\mf{b}$.}
   \end{array}
 \right.
\]

The degrees of the generating diagrams are
\[
\deg
\left( \;
\hackcenter{\begin{tikzpicture}[scale=0.6]
    \draw[thick, ] (0,0) -- (0,1.5)  node[pos=.35, shape=coordinate](DOT){};
    \filldraw  (DOT) circle (2.5pt);
    \node at (0,-.25) {$\scs \;$};
    \node at (0,1.5) {$\scs \:$};
\end{tikzpicture}}\;
\right)
= 2,
\quad
\deg
\left( \;
\hackcenter{\begin{tikzpicture}[scale=0.6]
    \draw[thick,red, double,  ] (0,0) to (0,1.5);
 \filldraw[red]  (0,.5) circle (2.75pt);
    \node at (0,-.25) {$\scs s$};
    \node at (-.5,.5) {$\scs E_d$};
    \node at (0,1.5) {$\scs \:$};
\end{tikzpicture}}\;
\right)
= 2d,
\quad
\deg\left( \; \hackcenter{\begin{tikzpicture}[scale=0.6]
    \draw[thick, ] (0,0) .. controls (0,.75) and (.75,.75) .. (.75,1.5);
    \draw[thick, ] (.75,0) .. controls (.75,.75) and (0,.75) .. (0,1.5);
    \node at (0.5,-.25) {$\scs \;$};
    \node at (0.5,1.5) {$\scs \:$};
\end{tikzpicture}} \; \right) = -2,
\quad
\deg\left( \; \hackcenter{\begin{tikzpicture}[scale=0.6]
    \draw[thick, ] (0,0) .. controls (0,.75) and (.75,.75) .. (.75,1.5);
    \draw[thick,red, double,  ] (.75,0) .. controls (.75,.75) and (0,.75) .. (0,1.5);
    \node at (0.75,-.25) {$\scs s$};
    \node at (0,1.5) {$\scs \:$};
\end{tikzpicture}} \; \right) =
\deg\left( \; \hackcenter{\begin{tikzpicture}[scale=0.6]
    \draw[thick,red, double,  ] (0,0) .. controls (0,.75) and (.75,.75) .. (.75,1.5);
    \draw[thick, ] (.75,0) .. controls (.75,.75) and (0,.75) .. (0,1.5);
    \node at (0,-.25) {$\scs s$};
    \node at (0.75,1.5) {$\scs \:$};
\end{tikzpicture}} \; \right) = s.
\]

The elements $D(\mathbf{s},n)$ are formal $\Bbbk$-linear combinations of these diagrams modulo isotopy, nilHecke relations, and the following local relations.
We give the algebra structure of $D(\mathbf{s},n)$ by concatenating diagrams vertically when the colors on the endpoints of two diagrams match.

\begin{eqnarray}
&&
\label{RBr2-rel}
\hackcenter{\begin{tikzpicture}[scale=0.8]
    \draw[thick] (0,0) .. controls ++(0,.375) and ++(0,-.375) .. (.75,.75);
    \draw[thick,red, double, ] (.75,0) .. controls ++(0,.375) and ++(0,-.375) .. (0,.75);
    \draw[ thick,red, double, ] (0,.75 ) .. controls ++(0,.375) and ++(0,-.375) .. (.75,1.5);
    \draw[thick,  ] (.75,.75) .. controls ++(0,.375) and ++(0,-.375) .. (0,1.5);
    \node at (0.75,-.25) {$\scs s$};
    \node at (0,1.75) {$\scs \:$};
\end{tikzpicture}}
 \;\; = \;\;
 \sum_{k=0}^s (-1)^k
\hackcenter{\begin{tikzpicture}[scale=0.8]
    \draw[thick, ] (0,0) -- (0,1.5)  node[pos=.55, shape=coordinate](DOT){};
    \filldraw  (DOT) circle (2.5pt);
    \draw[thick,red, double,  ] (.75,0) -- (.75,1.5) node[pos=.55, shape=coordinate](DOT2){};
    \node at (0.75,-.25) {$\scs s$};
    \node at (0,1.75) {$\scs \:$};
    \filldraw[red]  (DOT2) circle (2.75pt);
    \node at (-.75,.75) {$\scs s-k$};
    \node at (1.25,.75) {$\scs E_k$};
\end{tikzpicture}}
\qquad \quad
\hackcenter{\begin{tikzpicture}[scale=0.8]
    \draw[thick, red, double] (0,0) .. controls ++(0,.375) and ++(0,-.375) .. (.75,.75);
    \draw[thick ] (.75,0) .. controls ++(0,.375) and ++(0,-.375) .. (0,.75);
    \draw[ thick, ] (0,.75 ) .. controls ++(0,.375) and ++(0,-.375) .. (.75,1.5);
    \draw[thick,red, double,  ] (.75,.75) .. controls ++(0,.375) and ++(0,-.375) .. (0,1.5);
    \node at (0,-.25) {$\scs s$};
    \node at (0,1.75) {$\scs \:$};
\end{tikzpicture}}
 \;\; = \;\;
 \sum_{k=0}^s (-1)^k
\hackcenter{\begin{tikzpicture}[scale=0.8]
    \draw[thick, ] (.75,0) -- (.75,1.5)  node[pos=.55, shape=coordinate](DOT){};
    \filldraw  (DOT) circle (2.5pt);
    \draw[thick,red, double,  ] (0,0) -- (0,1.5) node[pos=.55, shape=coordinate](DOT2){};
    \filldraw[red]  (DOT2) circle (2.75pt);
    \node at (0,-.25) {$\scs s$};
    \node at (0,1.75) {$\scs \:$};
   \node at (-.5,.75) {$\scs E_k$};
    \node at (1.5,.75) {$\scs s-k$};
\end{tikzpicture}}
\end{eqnarray}
where we set $E_0=1$.
\begin{eqnarray}
&&
\label{blackdot}
\hackcenter{\begin{tikzpicture}[scale=0.8]
    \draw[thick, ] (0,0) .. controls (0,.75) and (.75,.75) .. (.75,1.5)
        node[pos=.25, shape=coordinate](DOT){};
    \draw[thick,red, double,  ] (.75,0) .. controls (.75,.75) and (0,.75) .. (0,1.5);
    \filldraw  (DOT) circle (2.5pt);
    \node at (0.75,-.25) {$\scs s$};
    \node at (0,1.75) {$\scs \:$};
\end{tikzpicture}}
\quad =\quad
\hackcenter{\begin{tikzpicture}[scale=0.8]
    \draw[thick, ] (0,0) .. controls (0,.75) and (.75,.75) .. (.75,1.5)
        node[pos=.75, shape=coordinate](DOT){};
    \draw[thick,red, double,  ] (.75,0) .. controls (.75,.75) and (0,.75) .. (0,1.5);
    \filldraw  (DOT) circle (2.5pt);
    \node at (0.75,-.25) {$\scs s$};
    \node at (0,1.75) {$\scs \:$};
\end{tikzpicture}}
\qquad \quad
\hackcenter{\begin{tikzpicture}[scale=0.8]
    \draw[thick,red, double,  ] (0,0) .. controls (0,.75) and (.75,.75) .. (.75,1.5);
    \draw[thick,  ] (.75,0) .. controls (.75,.75) and (0,.75) .. (0,1.5)
        node[pos=.75, shape=coordinate](DOT){};
    \filldraw  (DOT) circle (2.75pt);
    \node at (0,-.25) {$\scs s$};
    \node at (0,1.75) {$\scs \:$};
\end{tikzpicture}}
\quad=\quad
\hackcenter{\begin{tikzpicture}[scale=0.8]
    \draw[thick,red, double,  ] (0,0) .. controls (0,.75) and (.75,.75) .. (.75,1.5);
    \draw[thick, ] (.75,0) .. controls (.75,.75) and (0,.75) .. (0,1.5)
        node[pos=.25, shape=coordinate](DOT){};
      \filldraw  (DOT) circle (2.75pt);
    \node at (0,-.25) {$\scs s$};
    \node at (0,1.75) {$\scs \:$};
\end{tikzpicture}}
\\
\label{reddot}
&&
\hackcenter{\begin{tikzpicture}[scale=0.8]
    \draw[thick,red, double,  ] (0,0) .. controls (0,.75) and (.75,.75) .. (.75,1.5)
        node[pos=.25, shape=coordinate](DOT){};
    \draw[thick, ] (.75,0) .. controls (.75,.75) and (0,.75) .. (0,1.5);
    \filldraw[red]  (DOT) circle (2.5pt);
    \node at (0,-.25) {$\scs s$};
    \node at (0,1.75) {$\scs \:$};
    \node at (-.3,.4) {$\scs E_d$};
\end{tikzpicture}}
\quad =\quad
\hackcenter{\begin{tikzpicture}[scale=0.8]
    \draw[thick,red, double,  ] (0,0) .. controls (0,.75) and (.75,.75) .. (.75,1.5)
        node[pos=.75, shape=coordinate](DOT){};
    \draw[thick, ] (.75,0) .. controls (.75,.75) and (0,.75) .. (0,1.5);
    \filldraw[red]  (DOT) circle (2.5pt);
    \node at (0,-.25) {$\scs s$};
    \node at (0,1.75) {$\scs \:$};
    \node at (1.05,1.1) {$\scs E_d$};
\end{tikzpicture}}
\qquad \quad
\hackcenter{\begin{tikzpicture}[scale=0.8]
    \draw[thick, ] (0,0) .. controls (0,.75) and (.75,.75) .. (.75,1.5);
    \draw[thick,red, double,  ] (.75,0) .. controls (.75,.75) and (0,.75) .. (0,1.5)
        node[pos=.75, shape=coordinate](DOT){};
    \filldraw[red]  (DOT) circle (2.75pt);
    \node at (0.75,-.25) {$\scs s$};
    \node at (0,1.75) {$\scs \:$};
    \node at (-.3,1.1) {$\scs E_d$};
\end{tikzpicture}}
\quad=\quad
\hackcenter{\begin{tikzpicture}[scale=0.8]
    \draw[thick, ] (0,0) .. controls (0,.75) and (.75,.75) .. (.75,1.5);
    \draw[thick,red, double,  ] (.75,0) .. controls (.75,.75) and (0,.75) .. (0,1.5)
        node[pos=.25, shape=coordinate](DOT){};
      \filldraw[red]  (DOT) circle (2.75pt);
    \node at (0.75,-.25) {$\scs s$};
    \node at (0,1.75) {$\scs \:$};
    \node at (1.05,.4) {$\scs E_d$};
\end{tikzpicture}}
\end{eqnarray}

\begin{align}
\hackcenter{\begin{tikzpicture}[scale=0.8]
    \draw[thick,red, double,  ] (0,0) .. controls ++(0,1) and ++(0,-1) .. (1.2,2);
    \draw[thick, ] (.6,0) .. controls ++(0,.5) and ++(0,-.5) .. (0,1.0);
    \draw[thick, ] (0,1.0) .. controls ++(0,.5) and ++(0,-.5) .. (0.6,2);
    \draw[thick, ] (1.2,0) .. controls ++(0,1) and ++(0,-1) .. (0,2);
    \node at (0,-.25) {$\scs s$};
    \node at (0,2) {$\scs \:$};
\end{tikzpicture}}
\;\; = \;\;
\hackcenter{\begin{tikzpicture}[scale=0.8]
    \draw[thick,red, double,  ] (0,0) .. controls ++(0,1) and ++(0,-1) .. (1.2,2);
    \draw[thick, ] (.6,0) .. controls ++(0,.5) and ++(0,-.5) .. (1.2,1.0);
    \draw[thick, ] (1.2,1.0) .. controls ++(0,.5) and ++(0,-.5) .. (0.6,2.0);
    \draw[thick, ] (1.2,0) .. controls ++(0,1) and ++(0,-1) .. (0,2.0);
    \node at (0,-.25) {$\scs s$};
    \node at (0,2) {$\scs \:$};
\end{tikzpicture}}
\qquad \qquad
\hackcenter{\begin{tikzpicture}[scale=0.8]
    \draw[thick,  ] (0,0) .. controls ++(0,1) and ++(0,-1) .. (1.2,2);
    \draw[thick, ] (.6,0) .. controls ++(0,.5) and ++(0,-.5) .. (0,1.0);
    \draw[thick, ] (0,1.0) .. controls ++(0,.5) and ++(0,-.5) .. (0.6,2);
    \draw[thick,red, double,  ] (1.2,0) .. controls ++(0,1) and ++(0,-1) .. (0,2);
    \node at (1.2,-.25) {$\scs s$};
    \node at (0,2) {$\scs \:$};
\end{tikzpicture}}
\;\; = \;\;
\hackcenter{\begin{tikzpicture}[scale=0.8]
    \draw[thick,  ] (0,0) .. controls ++(0,1) and ++(0,-1) .. (1.2,2);
    \draw[thick, ] (.6,0) .. controls ++(0,.5) and ++(0,-.5) .. (1.2,1.0);
    \draw[thick, ] (1.2,1.0) .. controls ++(0,.5) and ++(0,-.5) .. (0.6,2.0);
    \draw[thick,red, double,  ] (1.2,0) .. controls ++(0,1) and ++(0,-1) .. (0,2.0);
    \node at (1.2,-.25) {$\scs s$};
    \node at (0,2) {$\scs \:$};
\end{tikzpicture}}
\end{align}

\begin{equation}\label{r3-2}
\hackcenter{\begin{tikzpicture}[scale=0.8]
    \draw[thick,  ] (0,0) .. controls ++(0,1) and ++(0,-1) .. (1.2,2);
    \draw[thick,red, double,  ] (.6,0) .. controls ++(0,.5) and ++(0,-.5) .. (0,1.0);
    \draw[thick,red, double,  ] (0,1.0) .. controls ++(0,.5) and ++(0,-.5) .. (0.6,2);
    \draw[thick, ] (1.2,0) .. controls ++(0,1) and ++(0,-1) .. (0,2);
    \node at (0.6,-.25) {$\scs s$};
    \node at (0,2) {$\scs \:$};
\end{tikzpicture}}
\;\; - \;\;
\hackcenter{\begin{tikzpicture}[scale=0.8]
    \draw[thick,  ] (0,0) .. controls ++(0,1) and ++(0,-1) .. (1.2,2);
    \draw[thick, red, double, ] (.6,0) .. controls ++(0,.5) and ++(0,-.5) .. (1.2,1.0);
    \draw[thick,red, double,  ] (1.2,1.0) .. controls ++(0,.5) and ++(0,-.5) .. (0.6,2.0);
    \draw[thick,  ] (1.2,0) .. controls ++(0,1) and ++(0,-1) .. (0,2.0);
    \node at (0.6,-.25) {$\scs s$};
    \node at (0,2) {$\scs \:$};
\end{tikzpicture}}
\;\; = \;\; 
\sum_{a+b+c=s-1}(-1)^b
\hackcenter{\begin{tikzpicture}[scale=0.8]
    \draw[thick, ] (0,0) -- (0,2)  node[pos=.5, shape=coordinate](DOT){};
    \draw[thick,red, double,  ] (.6,0) --  (.6,2) node[pos=.75, shape=coordinate](DOT2){};
    \draw[thick, ] (1.2,0) -- (1.2,2)  node[pos=.5, shape=coordinate](DOT1){};
    \node at (0.6,-.25) {$\scs s$};
    \node at (0,2) {$\scs \:$};
     \filldraw  (DOT) circle (2.75pt);
     \filldraw[red]  (DOT2) circle (2.75pt);
     \filldraw (DOT1) circle (2.75pt);
    \node at (-.25,1) {$\scs a$};
    \node at (.95,1.5) {$\scs E_b$};
    \node at (1.45,1) {$\scs c$};
\end{tikzpicture}}
\end{equation}


We diagrammatically represent elements of the splitter bimodules as the following trivalent diagrams
\[
\hackcenter{ \begin{tikzpicture} [scale=.75]
\draw[thick,red, double, double, ] (-2.25,0)--(-2.25,2);
\draw[thick,red, double, double, ] (-.75,0)--(-.75,2);
\draw[thick, red, double, double] (0,0).. controls ++(0,.3) and ++(0,-.3) ..(.6,1);
\draw[thick,red, double, double, ] (1.2,0).. controls ++(0,.3) and ++(0,-.3) ..(.6,1) to (.6,2);
\draw[thick,red, double, double, ] (2,0)--(2,2);
\draw[thick,red, double, double, ] (3.5,0)--(3.5,2);
 \node at (-1.5,1) {$\cdots$};
 \node at (2.75,1) {$\cdots$};
 \node at (-2.25,-.2) {$\scs s_1$};
 \node at (-.75,-.2) {$\scs s_{j-1}$};
 \node at (2,-.2) {$\scs s_{j+2}$};
 \node at (3.5,-.2) {$\scs s_m$};
 \node at (0,-.2) {$\scs s_j$};
 \node at (1.2,-.2) {$\scs s_{j+1}$};
\node at (.6,2.2) {$\scs s_j+s_{j+1}$};
\end{tikzpicture}}\in \bigtriangleup^j(\mathbf{s})
\qquad
\hackcenter{ \begin{tikzpicture} [scale=.75]
\draw[thick,red, double, double, ] (-2.25,0)--(-2.25,2);
\draw[thick,red, double, double, ] (-.75,0)--(-.75,2);
\draw[thick,red, double, double, ](0,2).. controls ++(0,-.3) and ++(0,.3) ..(.6,1);
\draw[thick,red, double, double, ](1.2,2).. controls ++(0,-.3) and ++(0,.3) ..(.6,1) to (.6,0);
\draw[thick,red, double, double, ] (2,0)--(2,2);
\draw[thick,red, double, double, ] (3.5,0)--(3.5,2);
 \node at (-1.5,1) {$\cdots$};
 \node at (2.75,1) {$\cdots$};
 \node at (-2.25,-.2) {$\scs s_1$};
 \node at (-.75,-.2) {$\scs s_{j-1}$};
 \node at (1.95,-.2) {$\scs s_{j+2}$};
 \node at (3.5,-.2) {$\scs s_m$};
 \node at (0,2.2) {$\scs s_{j}$};
 \node at (1.2,2.2) {$\scs s_{j+1}$};
\node at (.6,-.2) {$\scs s_j+s_{j+1}$};
\end{tikzpicture}}\in \bigtriangledown^j(\mathbf{s})
\]
with $n$ black strands intersecting the pictures and black (resp. $E_d$) dots putting on black (resp. red) strands. 

The tensor products of splitter bimodules are diagrammatically represented by concatenating trivalent diagrams with $n$ black strands.

\begin{example}
An element of $\bigtriangleup^j(\mathbf{s})\otimes \bigtriangledown^j(\mathbf{s})$ is represented by the diagram
$$
\hackcenter{ \begin{tikzpicture} [scale=.75]
\draw[thick,red, double, double, ] (-2.25,0)--(-2.25,2);
\draw[thick,red, double, double, ] (-.75,0)--(-.75,2);
\draw[thick,red, double, double, ](0.6,1.5).. controls ++(0,-.15) and ++(0,.15) ..(0,1).. controls ++(0,-.15) and ++(0,.15) ..(.6,.5);
\draw[thick,red, double, double, ] (0.6,2)to (0.6,1.5).. controls ++(0,-.15) and ++(0,.15) ..(1.2,1).. controls ++(0,-.15) and ++(0,.15) ..(.6,.5) to (.6,0);
\draw[thick,red, double, double, ] (2,0)--(2,2);
\draw[thick,red, double, double, ] (3.5,0)--(3.5,2);
 \node at (-1.5,1) {$\cdots$};
 \node at (2.75,1) {$\cdots$};
 \node at (-2.25,-.2) {$\scs s_1$};
 \node at (-.75,-.2) {$\scs s_{j-1}$};
 \node at (2.,-.2) {$\scs s_{j+2}$};
 \node at (3.5,-.2) {$\scs s_m$};
 \node at (-0.2,1.25) {$\scs s_{j}$};
 \node at (1.55,1.25) {$\scs s_{j+1}$};
\node at (.6,-.2) {$\scs s_j+s_{j+1}$};
\node at (.6,2.2) {$\scs s_j+s_{j+1}$};
\end{tikzpicture}}$$
with $n$ black strands, black dots and red dots.
An element of $\bigtriangledown^j(\mathbf{s})\otimes\bigtriangleup^j(\mathbf{s})$ is represented by the diagram
$$
\hackcenter{ \begin{tikzpicture} [scale=.75]f
\draw[thick,red, double, double, ] (-2.25,0)--(-2.25,2);
\draw[thick,red, double, double, ] (-.75,0)--(-.75,2);
\draw[thick,red, double, double, ](0,2).. controls ++(0,-.15) and ++(0,.15) ..(.6,1.5);
\draw[thick,red, double, double, ](0,0).. controls ++(0,.15) and ++(0,-.15) ..(.6,.5);
\draw[thick,red, double, double, ](1.2,2).. controls ++(0,-.15) and ++(0,.15) ..(.6,1.5) to (.6,.5).. controls ++(0,-.15) and ++(0,.15) ..(1.2,0);
\draw[thick,red, double, double, ] (2,0)--(2,2);
\draw[thick,red, double, double, ] (3.5,0)--(3.5,2);
 \node at (-1.5,1) {$\cdots$};
 \node at (2.75,1) {$\cdots$};
 \node at (-2.25,-.2) {$\scs s_1$};
 \node at (-.75,-.2) {$\scs s_{j-1}$};
 \node at (1.95,-.2) {$\scs s_{j+2}$};
 \node at (3.5,-.2) {$\scs s_m$};
 \node at (0,2.2) {$\scs s_{j}$};
 \node at (1.2,2.2) {$\scs s_{j+1}$};
 \node at (0,-.2) {$\scs s_{j}$};
 \node at (1.2,-.2) {$\scs s_{j+1}$};
\end{tikzpicture}}
$$
with $n$ black strands, black dots and red dots. 
\end{example}

\section{A $p$-DG structure on deformed Webster algebra $D(\mathbf{s},n)$ of type $A_1$}\label{sec4}

\subsection{A $p$-derivation on the deformed Webster $p$-DG algebra $D(\mathbf{s},n)$ of type $A_1$}
We define a derivation $\partial_D:D(\mathbf{s},n)\to D(\mathbf{s},n)$ with degree $2$ by

\begin{eqnarray}
&&\partial_D(x_i)=x_i^2,\\
&&\partial_D(E(d)_ie(\mathbf{i}))=\left\{\begin{array}{ll}
(E(d)_iE(1)_i-(d+1) E(d+1)_i)e(\mathbf{i})&\text{if $d<\mathbf{i}_i\in I_{\mathbf{s}}$}\\
E(d)_iE(1)_i e(\mathbf{i})&\text{if $\mathbf{i}_i=d$}
\end{array}\right.,
\\
&&\partial_D(\psi_j e(\mathbf{i}))=\left\{
\begin{array}{ll}
-x_j \psi_j e(\mathbf{i})- \psi_j x_{j+1}e(\mathbf{i}) &\text{if $\mathbf{i}_j=\mathbf{i}_{j+1}=\mathbf{b}$},\\
s \psi_j x_{j+1} e(\mathbf{i}) &\text{if $\mathbf{i}_j=s\in I_{\mathbf{s}},\mathbf{i}_{j+1}=\mathbf{b}$},\\
\psi_j E(1)_{j+1} e(\mathbf{i}) &\text{if $\mathbf{i}_j=\mathbf{b},\mathbf{i}_{j+1}=s\in I_{\mathbf{s}}$}.
\end{array}
\right.
\end{eqnarray}

By the Leibniz rule, we extend the $p$-DG structure to the algebra $D(\mathbf{s},n)$.

We will show that this $p$-derivation $\partial_D$ preserves double and triple red, double-black relations.

\begin{theorem}
The deformed Webster algebra of type $A_1$ with the $p$-derivation $(D(\mathbf{s},n),\partial_D)$ is a $p$-DG algebra.
\end{theorem}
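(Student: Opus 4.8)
The plan is to verify the two defining conditions of a $p$-DG algebra separately: that $\partial_D$ is $p$-nilpotent, and that it descends to a well-defined derivation on $D(\mathbf{s},n)$, i.e. it annihilates every defining relation. For nilpotency, Lemma \ref{lemma} reduces $\partial_D^p=0$ to the generators $x_i$, $E(d)_i e(\mathbf{i})$ and $\psi_j e(\mathbf{i})$. On $x_i$ one gets $\partial_D^p(x_i)=p!\,x_i^{p+1}=0$, and on $E(d)_i$ the derivation is literally the elementary-symmetric-function derivation of Proposition \ref{symfun} (the two cases in the definition coincide once one notes $E(d+1)_i=0$ when $d=\mathbf{i}_i$), so $\partial_D^p(E(d)_i)=0$ by the same divisibility-by-$p$ argument carried out there.

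The generator $\psi_j e(\mathbf{i})$ splits into three cases. In the black--black case $\partial_D$ agrees with $\partial_{\mathrm{NH}}$, so $\partial_D^2$ is a multiple of $2$ while $\partial_D^3=0$, giving $\partial_D^p=0$ for every $p$. When $\mathbf{i}_j=s\in I_{\mathbf{s}}$, $\mathbf{i}_{j+1}=\mathbf{b}$, iterating yields $\partial_D^p(\psi_j e(\mathbf{i}))=\bigl(\prod_{a=0}^{p-1}(s+a)\bigr)\psi_j x_{j+1}^p e(\mathbf{i})$, and the coefficient is a product of $p$ consecutive integers, hence $0$ in $\Bbbk$. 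The genuinely new case is $\mathbf{i}_j=\mathbf{b}$, $\mathbf{i}_{j+1}=s\in I_{\mathbf{s}}$, where $\partial_D(\psi_j e(\mathbf{i}))=\psi_j e(\mathbf{i})\,E(1)_{j+1}$. Writing $u=\psi_j e(\mathbf{i})$ and $w=E(1)_{j+1}$, the Leibniz rule gives $\partial_D^k(u)=u\,P_k$ with $P_k=(m_w+\partial_D)^k(1)$ inside the symmetric-function algebra of the red strand, $m_w$ being multiplication by $w$. Realizing this algebra inside $\Bbbk[t_1,\dots,t_s]$ with $\partial_D=\sum_\ell t_\ell^2\partial_{t_\ell}$ and $w=\sum_\ell t_\ell$, one computes $(m_w+\partial_D)(t^\alpha)=\sum_\ell(1+\alpha_\ell)\,t^\alpha t_\ell$, so a weighted monomial count gives $P_k=k!\,h_k$ with $h_k$ the complete homogeneous symmetric function; hence $\partial_D^p(u)=p!\,h_p\,u=0$. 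By Lemma \ref{lemma} this establishes $\partial_D^p=0$ on all of $D(\mathbf{s},n)$.

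It remains to show $\partial_D$ preserves the relations; since $\partial_D$ is homogeneous of degree $2$ and every relation is homogeneous, it suffices to compare $\partial_D$ applied to both sides. The purely black (nilHecke) relations are handled by Proposition \ref{nilhecke}, the relations internal to the $E(d)_i$ by Proposition \ref{symfun}, and the commutation relations are immediate. For the mixed relations the decisive simplification is to rewrite the alternating deformation sums as values of the characteristic polynomial $P(x)=\prod_\ell(x-t_\ell)=\sum_d(-1)^d E(d)\,x^{s-d}$ of the red strand: the right-hand side of \eqref{RBr2-rel} becomes $P(x_j)$, the relation \eqref{rel:r2} becomes $\psi_j^2 e(\mathbf{i})=P(x_{j+1})e(\mathbf{i})$, and the right-hand side of \eqref{r3-2} becomes the divided difference $\bigl(P(x_j)-P(x_{j+2})\bigr)/(x_j-x_{j+2})$. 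With these closed forms one checks compatibility using only $\partial_D(x)=x^2$ and $\partial_D(t_\ell)=t_\ell^2$; for example $\partial_D P(x_{j+1})=\sum_\ell(x_{j+1}^2-t_\ell^2)\prod_{\ell'\neq\ell}(x_{j+1}-t_{\ell'})=(s\,x_{j+1}+E(1))P(x_{j+1})$, which matches $\partial_D(\psi_j^2 e(\mathbf{i}))$ computed from the red--black and black--red crossing formulas.

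The main obstacle is the triple relation \eqref{r3-2} (and, to a lesser degree, \eqref{RBr2-rel} and \eqref{rel:r2}): applying $\partial_D$ to the divided-difference right-hand side and to the two triple crossings on the left produces many dotted diagrams, and matching them requires a careful telescoping organized by the generating function $P$. I expect the bookkeeping there---tracking which strand carries each new dot produced by the three cases of $\partial_D(\psi_\ell e(\mathbf{i}))$, and reconciling the signs $(-1)^b$---to be the crux; the remaining relations \eqref{nil-rel1}, \eqref{rel:r3}, \eqref{blackdot} and \eqref{reddot} should then follow by the same, but shorter, computations.
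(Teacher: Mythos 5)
Your proposal is correct in substance, and its two halves relate differently to the paper. The $p$-nilpotency argument is essentially the paper's: $x_i$ and $E(d)_i$ are handled by Propositions \ref{symfun} and \ref{nilhecke} exactly as you say, the red--black crossing produces the coefficient $\prod_{a=0}^{p-1}(s+a)$, a product of $p$ consecutive integers, and for the black--red crossing the paper likewise finds $\partial_D^k(\psi_j e(\mathbf{i}))=k!\,\psi_jH(k)_{j+1}e(\mathbf{i})$ by induction (stated there with a small indexing slip); your derivation of $P_k=k!\,h_k$ by iterating $m_w+\partial_D$ on monomials is a cleaner route to the same identity. Where you genuinely diverge is the relation-preservation step: the paper expands $\partial_D(\psi_j^2e(\mathbf{i}))$ and $\partial_D$ of the alternating sum term by term and matches coefficients of $x_j^{\mathbf{i}_{j+1}-d}E(d)_{j+1}$, whereas you repackage the deformed relations as evaluations of the characteristic polynomial $P(x)=\sum_d(-1)^dE(d)x^{s-d}$ of the red strand and reduce everything to the single identity $\partial_DP(x)=(sx+E_1)P(x)$. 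Your check of \eqref{rel:r2} this way is correct and arguably more transparent than the paper's expansion, and it treats \eqref{RBr2-rel}, \eqref{rel:r2} and \eqref{r3-2} uniformly, the last as a divided difference of $P$.

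The one place you stop short of a proof is the triple relation \eqref{rel:r3}/\eqref{r3-2}: you correctly identify it as the crux and set up the divided-difference form of the right-hand side, but you do not carry out the comparison of the Leibniz expansion of $\partial_D\bigl((\psi_j\psi_{j+1}\psi_j-\psi_{j+1}\psi_j\psi_{j+1})e(\mathbf{i})\bigr)$ with $\partial_D$ of that divided difference. To be fair, the paper is also incomplete at exactly this point: it computes the left-hand side as an eight-term expression and then merely restates the defining relation for the right-hand side without exhibiting the equality. So your proposal is no weaker than the printed argument here, but a complete proof still requires executing that comparison; the formalism you set up (differentiating $D=(P(x_j)-P(x_{j+2}))/(x_j-x_{j+2})$ using $\partial_D(x_j-x_{j+2})=(x_j+x_{j+2})(x_j-x_{j+2})$) reduces it to a finite polynomial identity, so the route is viable.
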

\begin{proof}

\noindent
$\bullet$ {Differential map}: First, we show the derivation preserves relations of $D(\mathbf{s},n)$.
It is enough to show that the derivation preserves relations \eqref{rel:r2} and \eqref{rel:r3} of $D(\mathbf{s},n)$ since it is easy to check that the derivation preserves other relations of $D(\mathbf{s},n)$.

\noindent
Relation \eqref{rel:r2}: 
We have
\begin{eqnarray}
\nonumber
\partial_D(\psi_j^2 e(\mathbf{i}))&=&
\partial_D(\psi_j e(s_j(\mathbf{i}))\psi_j e(\mathbf{i}))\\
&=&
\label{r2:cal1}
\partial_D(\psi_j e(s_j(\mathbf{i})))\psi_j e(\mathbf{i})+\psi_j e(\mathbf{i})\partial_D(\psi_j e(s_j(\mathbf{i}))).
\end{eqnarray}

When $\mathbf{i}_j=\mathbf{i}_{j+1}={\mathfrak{b}}$, $e(s_j(\mathbf{i}))=e(\mathbf{i})$.
Therefore, the element \eqref{r2:cal1} is
\begin{eqnarray}
\nonumber
&&-x_j\psi_j e(\mathbf{i})\psi_j e(\mathbf{i})-\psi_j x_{j+1}e(\mathbf{i})\psi_j e(\mathbf{i})-\psi_j e(\mathbf{i})x_j\psi_j e(\mathbf{i})-\psi_j e(\mathbf{i})\psi_j x_{j+1}e(\mathbf{i})
\\
\label{r2:cal2}
&=&-0-\psi_j x_{j+1}\psi_j e(\mathbf{i})-\psi_j x_j\psi_j e(\mathbf{i})-0
\end{eqnarray}
We have $\psi_j x_{j+1}+\psi_j x_j=x_{j+1}\psi_j +x_j\psi_j$.
Hence, the element \eqref{r2:cal2} is 
$$
-\psi_j x_{j+1}\psi_j e(\mathbf{i})-\psi_j x_j\psi_j e(\mathbf{i})=-x_{j+1}\psi_j^2  e(\mathbf{i})-x_j\psi_j^2 e(\mathbf{i})=0.
$$
This is equal to the derivation of the right-hand side.

When $\mathbf{i}_j={\mathfrak{b}}$ and $\mathbf{i}_{j+1}\in I_{\mathbf{s}}$, the element \eqref{r2:cal1} is
\begin{eqnarray}
\nonumber
&&\mathbf{i}_{j+1}\psi_j x_{j+1}e(s_j(\mathbf{i})))\psi_j e(\mathbf{i})+\psi_j e(s_j(\mathbf{i}))\psi_j E(1)_{j+1}e(\mathbf{i})\\
&=&\mathbf{i}_{j+1}x_{j}(\sum_{d=0}^{\mathbf{i}_{j+1}}(-1)^{d} x_j^{\mathbf{i}_{j+1}-d} E(d)_{j+1}) e(\mathbf{i})+ E(1)_{j+1}(\sum_{d=0}^{\mathbf{i}_{j+1}}(-1)^{d} x_j^{\mathbf{i}_{j+1}-d} E(d)_{j+1})e(\mathbf{i})
\\
\label{r2:cal3}
&=&\mathbf{i}_{j+1}x_j^{\mathbf{i}_{j+1}+1} e(\mathbf{i})
+\sum_{d=0}^{\mathbf{i}_{j+1}-1}(-1)^{d+1} x_j^{\mathbf{i}_{j+1}-d} (\mathbf{i}_{j+1}E(d+1)_{j+1}-E(1)_{j+1}E(d)_{j+1} )e(\mathbf{i})\\
\nonumber
&&+(-1)^{\mathbf{i}_{j+1}} E(1)_{j+1}E(\mathbf{i}_{j+1})_{j+1}e(\mathbf{i})
\end{eqnarray}

On the other hand, the derivation of the right-hand is
\begin{eqnarray*}
&&\sum_{d=0}^{\mathbf{i}_{j+1}}(-1)^{d}(E(1)_{j+1}E(d)_{j+1}-(d+1)E(d+1)_{j+1}) x_{j}^{\mathbf{i}_{j+1}-d} e(\mathbf{i})\\
&&+\sum_{d=0}^{\mathbf{i}_{j+1}}(-1)^{d}(\mathbf{i}_{j+1}-d)E(d)_{j+1} x_{j}^{\mathbf{i}_{j+1}-d+1} e(\mathbf{i})
\end{eqnarray*}
We find this is equal to the element \eqref{r2:cal3}.
When $\mathbf{i}_j\in I_{\mathbf{s}}$ and $\mathbf{i}_{j+1}={\mathfrak{b}}$, the derivation preserves the relation \eqref{rel:r2} by a similar calculation.

\noindent
Relation \eqref{rel:r3}: 
We show that the derivation preserves the relation \eqref{rel:r3} in the case of $\mathbf{i}_{j+1}\in I_{\mathbf{s}}, \mathbf{i}_{j}=\mathbf{i}_{j+2}={\mathfrak{b}}$.
It is easy to find the derivation preserves the relation \eqref{rel:r3} in other cases.

It is suffice to show that
$$
\partial_D(\psi_1\psi_2 \psi_1 e(\mathbf{b}s\mathbf{b})-\psi_2\psi_1 \psi_2 e(\mathbf{b}s\mathbf{b}))=
\sum_{d_1+d_2+d_3=s-1}(-1)^{d_3}\partial_D(x_1^{d_1}E(d_3)_{2}x_{3}^{d_2}e(\mathbf{b}s\mathbf{b})).
$$

The left-hand side is
\begin{eqnarray}
\nonumber
&&
\partial_D(
\psi_1e(s\mathbf{b}\mathbf{b})\psi_2 e(s\mathbf{b}\mathbf{b})\psi_1 e(\mathbf{b}s\mathbf{b})
-\psi_2e(\mathbf{b}\mathbf{b}s)\psi_1e(\mathbf{b}\mathbf{b}s) \psi_2 e(\mathbf{b}s\mathbf{b}))\\
\nonumber
&=&
\partial_D(\psi_1e(s\mathbf{b}\mathbf{b}))\psi_2 e(s\mathbf{b}\mathbf{b})\psi_1 e(\mathbf{b}s\mathbf{b})
+\psi_1e(s\mathbf{b}\mathbf{b})\partial_D(\psi_2 e(s\mathbf{b}\mathbf{b}))\psi_1 e(\mathbf{b}s\mathbf{b})
+\psi_1e(s\mathbf{b}\mathbf{b})\psi_2 e(s\mathbf{b}\mathbf{b})\partial_D(\psi_1 e(\mathbf{b}s\mathbf{b}))
\\
\nonumber
&&
-\partial_D(\psi_2e(\mathbf{b}\mathbf{b}s))\psi_1e(\mathbf{b}\mathbf{b}s) \psi_2e(\mathbf{b}s\mathbf{b})
-\psi_2e(\mathbf{b}\mathbf{b}s)\partial_D(\psi_1e(\mathbf{b}\mathbf{b}s)) \psi_2 e(\mathbf{b}s\mathbf{b})
-\psi_2e(\mathbf{b}\mathbf{b}s)\psi_1e(\mathbf{b}\mathbf{b}s) \partial_D(\psi_2 e(\mathbf{b}s\mathbf{b}))
\\
\nonumber
&=&
s x_1\psi_1\psi_2\psi_1 e(\mathbf{b}s\mathbf{b})
-x_2 \psi_1\psi_2 \psi_1 e(\mathbf{b}s\mathbf{b})
-\psi_1\psi_2 x_3 \psi_1 e(\mathbf{b}s\mathbf{b})
+\psi_1\psi_2 \psi_1E(1)_2 e(\mathbf{b}s\mathbf{b})
\\
\nonumber
&&
-E(1)_2\psi_2\psi_1 \psi_2 e(\mathbf{b}s\mathbf{b})
+\psi_2 x_1\psi_1 \psi_2 e(\mathbf{b}s\mathbf{b})
+\psi_2 \psi_1x_2 \psi_2 e(\mathbf{b}s\mathbf{b})
-\psi_2 \psi_1 s \psi_2 x_3e(\mathbf{b}s\mathbf{b}).
\end{eqnarray}

The right-hand side is

$$=\left\{
	\begin{array}{ll}
	\displaystyle \sum_{d_1+d_2+d_3=\mathbf{i}_{j+1}-1}(-1)^{d_3}x_j^{d_1}E(d_3)_{j+1}x_{j+2}^{d_2}e(\mathbf{i})
	&\text{if } \mathbf{i}_{j+1}=1, \mathbf{i}_{j}=\mathbf{i}_{j+2}={\mathfrak{b}},\\
	0
	&\text{otherwise},
	\end{array}\right. 
$$

Next, we show the derivation $\partial_D$ is $p$-nilpotent for generators.

\noindent
$\bullet$ {$p$-nilpotency}: In proof of Proposition \ref{symfun} and \ref{nilhecke}, we showed that the derivation $\partial_D$ is $p$-nilpotent for generators $x_j$, $E(d)_j$ and $\psi_j e(\mathbf{i})$ when $\mathbf{i}_j=\mathbf{i}_{j+1}=\mathfrak{b}$. 

In the case of $\mathbf{i}_j\in I_{\mathbf{s}}$ and $\mathbf{i}_{j+1}=\mathfrak{b}$, we have
$$
\partial_D^p(\psi_j e(\mathbf{i}))=\partial_D^{p-1}(s \psi_j x_{j+1} e(\mathbf{i}))=\partial_D^{p-2}(s(s+1) \psi_j x_{j+1}^2 e(\mathbf{i}))=\cdots=\prod_{a=0}^{p-1}a\psi_j x_{j+1}^p e(\mathbf{i})=0.
$$

In the case of $\mathbf{i}_j=\mathfrak{b}$ and $\mathbf{i}_{j+1}\in I_{\mathbf{s}}$, we have
$$
\partial_D^2(\psi_j e(\mathbf{i}))=\partial_D(\psi_j E(1)_{j+1} e(\mathbf{i}))=2\psi_j (E(1)_{j+1}^2-E(2)_{j+1} )e(\mathbf{i})=2\psi_j H(2)_{j+1} e(\mathbf{i}),
$$
where $H(2)_{j+1}$ is the expression of the second complete symmetric function in the elementary symmetric functions.
By the induction hypothesis, we have 
$$
\partial_D^p(\psi_j e(\mathbf{i}))=\prod_{a=1}^{p}(a+1)\psi_j H(p+1)_{j+1} e(\mathbf{i})=0,
$$
where $H(p+1)_{j+1}$ is the expression of the $p+1$-th complete symmetric function in the elementary symmetric functions.
Using Lemma \ref{lemma}, we have $\partial_D^p=0$ on the deformed Webster algebra $D(\mathbf{s},n)$.
\end{proof}

\subsection{$p$-DG structure on splitter bimodules $ \bigtriangleup^j(\mathbf{s})$ and $\bigtriangledown^j(\mathbf{s})$}

We define derivations $\partial_\bigtriangleup: \bigtriangleup^j(\mathbf{s})\to \bigtriangleup^j(\mathbf{s})$ and $\partial_\bigtriangledown(\mathbf{s}): \bigtriangledown^j(\mathbf{s})\to \bigtriangledown^j(\mathbf{s})$ by $\partial_D\otimes \mathrm{id} +\mathrm{id} \otimes\partial_D$.

By definition, these derivations have $p$-nilpotency.
We show the equality
$$
\partial_\bigtriangleup(E(i)_j\otimes 1)=\partial_\bigtriangleup(1\otimes \sum_{a+b=i}E(a)_jE(b)_{j+1})
$$
for $1\leq i\leq s_j+s_{j+1}$.

The left-hand side equals to
\begin{eqnarray*}
(E(1)_jE(i)_j-(i+1)E(i+1)_j)\otimes 1&=&1\otimes (E(1)_j+E(1)_{j+1})\sum_{a+b=i}E(a)_jE(b)_{j+1}\\
&&-1\otimes \sum_{a+b=i+1}(i+1)E(a)_jE(b)_{j+1}
\end{eqnarray*}

By a direct calculation, we find that the right-hand side equals to
\begin{eqnarray*}
&&1\otimes \sum_{a+b=i\atop a\geq 1}\partial_D(E(a)_j)E(b)_{j+1}
+1\otimes \sum_{a+b=i\atop b\geq 1}E(a)_j\partial_D(E(b)_{j+1})
\\
&=&1\otimes \sum_{a+b=i\atop a\geq 1}(E(1)_jE(a)_j-(a+1)E(a+1)_j)E(b)_{j+1}\\
&&+1\otimes \sum_{a+b=i\atop b\geq 1} E(a)_j(E(1)_{j+1}E(b)_{j+1}-(b+1)E(b+1)_{j+1})
\\
&=&1\otimes (E(1)_j+E(1)_{j+1})\sum_{a+b=i}E(a)_jE(b)_{j+1}\\
&&-1\otimes \sum_{a+b=i+1}(i+1)E(a)_jE(b)_{j+1}.
\end{eqnarray*}

By a similar way, we have the equality
$$
\partial_\bigtriangledown(1\otimes E(i)_j)=\partial_\bigtriangledown(\sum_{a+b=i}E(a)_jE(b)_{j+1}\otimes 1)
$$
for $1\leq i\leq s_j+s_{j+1}$.

Hence, we have the following statement.
\begin{theorem}
The splitter bimodules with the $p$-derivation $(\bigtriangleup^j(\mathbf{s}),\partial_\bigtriangleup)$ and $(\bigtriangledown^j(\mathbf{s}),\partial_\bigtriangledown)$ are $p$-DG modules.
\end{theorem}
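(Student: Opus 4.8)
The plan is to check, for each of $\partial_\bigtriangleup$ and $\partial_\bigtriangledown$, the three conditions defining a $p$-DG bimodule: that it is a degree-$2$ endomorphism, that it obeys the left and right Leibniz rules, and that it is $p$-nilpotent. The only point requiring real work is that the formula $\partial_D\otimes\mathrm{id}+\mathrm{id}\otimes\partial_D$ descends to a well-defined map on the balanced tensor product over $D(\mathbf{s}^j,n)$; everything else is then formal. The displayed equalities preceding the statement are exactly the computations needed for this well-definedness, so the task is mainly to explain why they suffice.

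For $\partial_\bigtriangleup$, I would start from the balancing relation in $D(\mathbf{s}^j,n)\otimes_{D(\mathbf{s}^j,n)}\hat{e_j}D(\mathbf{s},n)$, namely $ar\otimes m=a\otimes\Phi_{j,s_{j+1}}(r)m$ for $r\in D(\mathbf{s}^j,n)$, the left action on the second factor being induced by the inclusion $\Phi_{j,s_{j+1}}$. Applying $\partial_D\otimes\mathrm{id}+\mathrm{id}\otimes\partial_D$ to both sides, expanding with the Leibniz rule for $\partial_D$, and pushing the resulting $D(\mathbf{s}^j,n)$-factors across the tensor symbol, one finds that the two expressions agree if and only if $\Phi_{j,s_{j+1}}(\partial_D(r))=\partial_D(\Phi_{j,s_{j+1}}(r))$ for all $r$. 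Thus well-definedness is equivalent to the statement that the inclusion $\Phi_{j,s_{j+1}}$ is a morphism of $p$-DG algebras, i.e.\ intertwines the two copies of $\partial_D$.

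To prove this intertwining identity I would reduce to algebra generators: since $\Phi_{j,s_{j+1}}$ is an algebra map and $\partial_D$ a derivation, if $\Phi_{j,s_{j+1}}\circ\partial_D$ and $\partial_D\circ\Phi_{j,s_{j+1}}$ agree on generators they agree everywhere. For $x_\ell$, and for $E(d)_\ell$ on an unsplit strand, the inclusion merely shifts indices and the derivation formulas match after relabeling; for a crossing $\psi_\ell$ the verification is a short bookkeeping using the three colour-cases of $\partial_D(\psi_\ell e(\mathbf{i}))$, the only nontrivial instance being when the black strand crosses the strand that is split and a single crossing becomes a double crossing. The genuinely new computation is the case of $E(i)_j$ on the merged strand, where $\Phi_{j,s_{j+1}}(E(i)_j)=\sum_{a+b=i}E(a)_jE(b)_{j+1}$; this is precisely the displayed equality $\partial_\bigtriangleup(E(i)_j\otimes 1)=\partial_\bigtriangleup(1\otimes\sum_{a+b=i}E(a)_jE(b)_{j+1})$ verified above, so no additional calculation is needed.

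The remaining conditions are immediate. The endomorphism $\partial_\bigtriangleup$ has degree $2$ because $\partial_D$ does and the tensor grading is additive; the left Leibniz rule for the $D(\mathbf{s}^j,n)$-action and the right Leibniz rule for the $D(\mathbf{s},n)$-action both follow factorwise directly from the Leibniz rule for $\partial_D$, independently of the intertwining; and writing $\partial_\bigtriangleup=X+Y$ with the commuting operators $X=\partial_D\otimes\mathrm{id}$ and $Y=\mathrm{id}\otimes\partial_D$, the identity $(X+Y)^p=X^p+Y^p$ valid in characteristic $p$ for commuting operators gives $\partial_\bigtriangleup^p=\partial_D^p\otimes\mathrm{id}+\mathrm{id}\otimes\partial_D^p=0$ by the $p$-nilpotency of $\partial_D$ proved in the previous theorem. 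This is the content of the remark that these derivations are $p$-nilpotent by definition. The case of $\bigtriangledown^j(\mathbf{s})$ is identical, with the balancing relation on the other factor and the sole nontrivial intertwining recorded in the companion equality $\partial_\bigtriangledown(1\otimes E(i)_j)=\partial_\bigtriangledown(\sum_{a+b=i}E(a)_jE(b)_{j+1}\otimes 1)$. I expect the main obstacle to be purely organizational: isolating the intertwining of $\partial_D$ with $\Phi_{j,s_{j+1}}$ as the single fact on which well-definedness rests, and confirming it on the $\psi$-generator that crosses the split strand in addition to the $E$-generator already treated.
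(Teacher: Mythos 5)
Your proposal is correct and follows essentially the same route as the paper: the central computation in both is the equality $\partial_\bigtriangleup(E(i)_j\otimes 1)=\partial_\bigtriangleup\bigl(1\otimes\sum_{a+b=i}E(a)_jE(b)_{j+1}\bigr)$ (and its companion for $\bigtriangledown^j(\mathbf{s})$), which you correctly identify as the intertwining of $\partial_D$ with the inclusion $\Phi_{j,s_{j+1}}$ on the merged $E$-generator, i.e.\ as the well-definedness of $\partial_D\otimes\mathrm{id}+\mathrm{id}\otimes\partial_D$ on the balanced tensor product. You are in fact somewhat more explicit than the paper, which asserts $p$-nilpotency ``by definition'' and does not spell out the reduction to generators or the check on the $\psi$-generators crossing the split strand.
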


\subsection{Relation to $p$-DG structure on Webster algebra}

Let $I$ be the two sided ideal generated by $\{E(d)_j,e(\mathbf{i})|1\leq j\leq m+n, d\geq 1, \mathbf{i}_1=\mathbf{b}\}$.
The quotient algebra $D({\bf s},n)/I$ is isomorphic to the Webster algebra $W_n^{\mathbf{s}}$ of type $A_1$.
Therefore, taking this quotient, the $p$-DG algebra $(D(\mathbf{s},n),\partial_D)$ naturally induces a $p$-DG structure on Webster algebra $W_n^{\mathbf{s}}$.

\begin{corollary}
The $p$-DG algebra $(D(\mathbf{s},n)/I,\partial_D)$ is isomorphic to the $p$-DG algebra $(W_n^{\mathbf{s}}, \partial_W)$ defined in Section \ref{pdg-webster}.
\end{corollary}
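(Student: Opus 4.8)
The plan is to upgrade the given underlying algebra isomorphism to an isomorphism of $p$-DG algebras by checking that it intertwines the two derivations on generators. First I would recall the algebra isomorphism $\rho\colon D(\mathbf{s},n)/I \xrightarrow{\ \sim\ } W_n^{\mathbf{s}}$ underlying the statement: it sends $x_j \mapsto x_j$, $\psi_\ell \mapsto \psi_\ell$, and $e(\mathbf{i}) \mapsto e(\mathbf{i})$, while every generator $E(d)_j$ with $d \geq 1$ is killed. The point making $\rho$ well defined is that, after imposing $E(d)_j = 0$ for $d \geq 1$ and $E(0)_j = 1$, relations \eqref{rel:r2} and \eqref{rel:r3} collapse to the corresponding Webster relations: only the $d=0$ (resp.\ $d_3 = 0$) summand survives, giving $\psi_j^2 e(\mathbf{i}) = x_{j+1}^{\mathbf{i}_j} e(\mathbf{i})$ and $\sum_{d_1+d_2 = \mathbf{i}_{j+1}-1} x_j^{d_1} x_{j+2}^{d_2} e(\mathbf{i})$, while every other defining relation of $D(\mathbf{s},n)$ not involving the $E(d)_j$ is already a Webster relation. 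Since this identification is granted in the text, I would take $\rho$ as given and concentrate on the derivations.

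Next I would verify that $\partial_D$ descends to $D(\mathbf{s},n)/I$, that is $\partial_D(I) \subseteq I$, for which it suffices to test the generators of $I$. For the idempotents $e(\mathbf{i})$ with $\mathbf{i}_1 = \mathbf{b}$ one has $\partial_D(e(\mathbf{i})) = 0$ by the standard convention that $\partial_D$ annihilates idempotents, so these lie in $I$ trivially. For $E(d)_j$ with $d \geq 1$, the defining formula returns either $(E(d)_j E(1)_j - (d+1) E(d+1)_j)e(\mathbf{i})$ or $E(d)_j E(1)_j e(\mathbf{i})$; in each term at least one factor $E(d')_j$ has index $d' \geq 1$, so the whole expression lies in $I$. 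Hence $\partial_D$ induces a well-defined derivation $\bar{\partial}_D$ on the quotient.

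The final and only substantive step is to check $\rho \circ \bar{\partial}_D = \partial_W \circ \rho$ on the generators $x_j$ and $\psi_j e(\mathbf{i})$; since both composites are $\rho$-twisted derivations satisfying the same Leibniz rule through the homomorphism $\rho$, agreement on generators forces agreement everywhere. For $x_j$ both sides equal $x_j^2$. For $\psi_j e(\mathbf{i})$ the first two cases of $\partial_D$ reproduce $\partial_W$ verbatim. The one place requiring genuine attention is the case $\mathbf{i}_j = \mathbf{b}$, $\mathbf{i}_{j+1} = s \in I_{\mathbf{s}}$: here $\partial_D(\psi_j e(\mathbf{i})) = \psi_j E(1)_{j+1} e(\mathbf{i})$, whose deformation factor $E(1)_{j+1}$ dies in the quotient, so $\bar{\partial}_D(\psi_j e(\mathbf{i})) = 0$, matching exactly the vanishing third case of $\partial_W$. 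This vanishing is the single point where the deformation and the quotient interact nontrivially, so it is what I would emphasize; the rest is bookkeeping. Assembling the three steps shows $\rho$ is an isomorphism of $p$-DG algebras, as claimed.
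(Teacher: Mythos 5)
Your proposal is correct and follows the same route the paper takes: the paper simply asserts that $\partial_D$ on $D(\mathbf{s},n)/I$ "is equivalent to" $\partial_W$, and your three steps (the underlying algebra isomorphism, the stability $\partial_D(I)\subseteq I$, and the generator-by-generator comparison with the key observation that $\psi_j E(1)_{j+1}e(\mathbf{i})$ dies in the quotient to match the vanishing third case of $\partial_W$) are exactly the details that assertion leaves implicit. No gap; your write-up is just a more complete version of the paper's one-line argument.
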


It is easy to find that $\partial_D$ on $D(\mathbf{s},n)/I$ is equivalent to $\partial_W$ defined in Section \ref{pdg-webster}.

\bibliographystyle{amsalpha}
\bibliography{bib_pDG}
%
%

\end{document}